\theoremstyle{definition}
\newtheorem{definition}{Definition}
\theoremstyle{plain}
\newtheorem{theorem}[definition]{Theorem}
\newtheorem{lemma}[definition]{Lemma}
\newtheorem{corollary}[definition]{Corollary}
\theoremstyle{remark}
\newtheorem{remark}[definition]{Remark}
\newtheorem{example}[definition]{Example}
\DeclareMathOperator{\Ass}{Ass}
\DeclareMathOperator{\reg}{reg}
\DeclareMathOperator{\HF}{HF}
\DeclareMathOperator{\HP}{HP}
\DeclareMathOperator{\aHP}{aHP}
\DeclareMathOperator{\aHF}{aHF}
\DeclareMathOperator{\gin}{gin}
\DeclareMathOperator{\iin}{in}
\DeclareMathOperator{\vol}{vol}
\DeclareMathOperator{\conv}{conv}
\DeclareMathOperator{\degrevlex}{degrevlex}
\def\field{\mathbb{K}}
\def\PP{\mathbb{P}}
\def\RR{\mathbb{R}}
\def\TT{\mathbb{T}}
\definecolor{qqqqff}{rgb}{0,0,1}
\definecolor{uuuuuu}{rgb}{0.27,0.27,0.27}
\definecolor{zzttqq}{rgb}{0.6,0.2,0}
\definecolor{xdxdff}{rgb}{0.49019607843137253,0.49019607843137253,1.0}
\let\to\longrightarrow
\let\mapsto\longmapsto
\begin{document}

\title{Asymptotic Hilbert Polynomial and limiting shapes}

\author{Marcin Dumnicki, Justyna Szpond, Halszka Tutaj-Gasi\'nska}

\thanks{Corresponding author: Marcin Dumnicki \\ Keywords: symbolic powers, asymptotic invariants.}

\subjclass{13P10; 14N20}

\begin{abstract}
The main aim of this paper is to provide a method which allows finding
limiting shapes of symbolic generic initial systems of higher-dimensional
subvarieties of $\PP^n$. M. Musta\c t\u a and S. Mayes established a connection
between volumes of complements of limiting shapes and the asymptotic
multiplicity for ideals of points. In the paper we prove a generalization
of this fact to higher-dimensional sets.
\end{abstract}

\maketitle

\section{Introduction}
In what follows, let $\field$ be an algebraically closed field of characteristic zero,
by $\field[\PP^n]=\field[x_1,\dots,x_{n+1}]$ we denote the homeogeneous
coordinate ring of the projective space $\PP^n$.
Let $I$ be a homogeneous radical ideal in $\field[\PP^n]$, let $I^{(m)}$ denote its $m$-th symbolic power, defined
as:
$$I^{(m)} = \field[\PP^n] \cap \bigcap_{Q \in \Ass(I)} (I^m)_{Q},$$
where localizations are embedded in a field of fractions of $\field[\PP^n]$ (\cite{Eis95}).
By the Zariski-Nagata theorem, for a radical homogeneous ideal $I$ in a polynomial ring
over an algebraically closed field, the $m$-th symbolic power $I^{(m)}$ is equal to
$$I^{(m)} = \bigcap_{p \in V(I)} \mathfrak{m}_{p}^m,$$
where $\mathfrak{m}_p$ denotes the maximal ideal of a point $p$, and $V(I)$ denotes the set of zeroes of $I$.
In characteristic zero, the symbolic power can also be described as the set of polynomials, which vanish
to order $m$ along $V(I)$, which (compare \cite{Sul08}) can be written as:
$$I^{(m)} = \left(f : \frac{\partial^{|\alpha|} f}{\partial x^{\alpha}} \in I \text{ for } |\alpha| \leq m-1 \right).$$
Thus symbolic powers are much more geometric in nature than regular powers $I^m$, but algebraically hard to find ---
finding generators of $I^{(m)}$ knowing generators of $I$ is, in many cases, beyond our knowlegde.
In recent years, symbolic powers receive much attention, e.g. \cite{BH,HH,ELS} and references therein. Since
computation of initial degree (the lowest degree of a non-zero form in $I^{(m)}$) or Castelnuovo-Mumford regularity
of $I^{(m)}$ for a given $m$
can be very hard, one idea is to compute asymptotic versions of these invariants, i.e. to measure,
how the initial degree of $I^{(m)}$ or $\reg(I^{(m)})$ grows when $m$ increases.

The sequence $\{I^{(m)}\}_{m}$ can be naturally regarded as a graded sequence of ideals
(by definition, the graded sequence $\{I_m\}_m$ of ideals satisfy $I_m \cdot I_k \subset I_{m+k}$;
in our case $I^{(m)} \cdot I^{(k)} \subset I^{(m+k)}$). To such graded sequences
one can attach asymptotic invariants, see e.g. \cite[Section 2.4.B]{PAG}.
An example of such an invariant is the \emph{limiting shape}, defined by S. Mayes \cite{Mayci}.
This invariant carries information about the geometry of $V(I)$, computing e.g.
its asymptotic initial degree (called also the Waldschmidt constant) or asymptotic regularity (for more details see \cite{Mayci,Maypoints}).

To define the limiting shape, consider first \emph{generic initial ideal} $\gin(I)$ of $I$,
as the initial ideal, with respect to the $\degrevlex$, the degree reverse lexicographical order,
of a generic coordinate change of $I$ (more information about initial ideals and orderings
can be found in any book on Gr\"obner Bases Theory). Galligo \cite{Gal74} assures that for a homogeneous ideal $I$ and a generic
choice of coordinates, the initial ideal of $I$ is fixed, hence the definition of $\gin(I)$ is correct.
Some of the properties of $\gin(I)$, even better than these of the usual initial ideal $\iin(I)$, are listed in \cite{Gre98}.

In the next step consider the sequence of monomial ideals $\gin(I^{(m)})$. The $m$-th symbolic power of a radical
ideal $I$ is saturated (that is, $I^{(m)}:M=I^{(m)}$, for $M=(x_1,\dots,x_{n+1})$; use Nagata-Zariski theorem), hence by Green \cite[Theorem 2.21]{Gre98}
no minimal generator of $\gin(I^{(m)})$ contains the last variable $x_{n+1}$. Therefore
these monomial ideals can be naturally regarded as ideals in $\field[x_1,\dots,x_n]$.
The Newton polytope of a monomial ideal is defined as a convex hull of the set of exponents:
$$P(J) := \conv(\{ \alpha \in \RR^n : x^{\alpha} \in J\}).$$

The limiting shape of an ideal $I$ as above is defined as
$$\Delta(I) = \bigcup_{m=1}^{\infty} \frac{P(\gin(I^{(m)}))}{m},$$
(see Mayes \cite{Maypoints}). So far, these limiting shapes have been found for complete
intersections (Mayes \cite{Mayci}),
points in $\PP^2$ (Mayes \cite{Maypoints} assuming Segre-Hirschowitz-Gimigliano-Harbourne
conjecture), star configurations in $\PP^n$ (the authors with T. Szemberg, \cite{DSST}). The
crucial result, which allows all the above computations, has been observed by Musta\c t\u a
\cite[Theorem 1.7 and Lemma 2.13]{Mus} and Mayes \cite[Proposition 2.14]{Maypoints}. Let
$\Gamma(I)$ be the closure of the complement of $\Delta(I)$ in $\RR^n_{\geq 0}$. If $I$ is a
zero-dimensional radical ideal of $r$ points, then $\Gamma(I)$ is bounded, and
$$\vol(\Gamma(I)) = \frac{r}{n!}.$$

In this paper we propose a generalization of the above theorem for ideals of any dimension. Of course,
the sets $\Gamma(I)$ are then unbounded (with infinite volume), hence the generalization cannot be straightforward.
Our idea is to introduce the asymptotic invariant, called the \emph{asymptotic Hilbert polynomial of $I$}, $\aHP_I$, which
will measure the size of $\Gamma(I)$. As in the known zero-dimensional cases, this invariant allows to already
find the $\Gamma(I)$ (or $\Delta(I)$) in many higher-dimensional cases, and we present some examples.
This invariant is also
a generalization of polynomials $\Lambda_{n,r,s}$ from \cite[Section 2]{DHST}. It also carries some natural properties, similar to that of $\HP_I$, such
as $\aHP_{I \cap J} = \aHP_I+\aHP_J$ for ideals satisfying $V(I)\cap V(J) = \varnothing$.

To define asymptotic Hilbert polynomial, recall that
the Hilbert function $\HF_{I}$ of a homogeneous ideal $I$ is defined as
$$\HF_{I}(t) = \dim_{\field}(\field[\PP^n]_t/I_t).$$
For $t$ big enough the above function behaves as a polynomial, the Hilbert polynomial
$\HP_{I}$ of $I$. We define two new objects.
\begin{definition}
The \emph{asymptotic Hilbert function of $I$}
$$\aHF_I(t):=\lim_{m\to \infty}\frac{\HF_{I^{(m)}}(mt)}{m^n}.$$
\end{definition}
and
\begin{definition}
The \emph{asymptotic Hilbert polynomial of $I$}
$$\aHP_I(t):=\lim_{m\to \infty}\frac{\HP_{I^{(m)}}(mt)}{m^n}.$$
\end{definition}
In the paper, we prove the existence of the first limit for radical ideals (Theorem \ref{ahf is a
limit}), the existence of the second for ideals with linearly bounded symbolic regularity
 (see Definition \ref{LBSR}) in Theorem \ref{ahp
istnieje}. Also we prove that (still for ideals with bounded regularity) $\aHP_I(t) = \aHF_I(t)$
for $t$ big enough (again Theorem \ref{ahp istnieje}) and that $\aHP_I(t)$ is a polynomial (Theorem
\ref{ahppoly}).

This new definition allows generalization of results of Musta\c t\u a in the following sense:
\begin{theorem}
Let $I$ be a homogeneous radical ideal. Then, for each integer $t \geq 0$,
$$\vol(\Gamma(I) \cap \{(x_1,\dots,x_n) : x_1+\ldots+x_n \leq t\}) = \aHF_I(t).$$
Moreover, for an ideal $I$ of $r$ points in $\PP^n$ and $t \gg 0$
$$\aHF_I(t) = \aHP_I(t) = \frac{r}{n!}.$$
\end{theorem}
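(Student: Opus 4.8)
The plan is to establish the volume identity for an arbitrary homogeneous radical ideal $I$, and then to evaluate both invariants directly when $I$ is the ideal of $r$ points. Write $\Sigma_t=\{\alpha\in\RR^n_{\ge0}:\alpha_1+\dots+\alpha_n\le t\}$, so that $m\Sigma_t$ is the simplex of size $mt$. Since $I^{(m)}$ is saturated, Green's theorem (recalled above) gives that $\gin(I^{(m)})$ has no minimal generator involving $x_{n+1}$ and is therefore the extension of a monomial ideal $J_m\subseteq\field[x_1,\dots,x_n]$. Counting the monomials of degree $\le mt$ that lie outside $J_m$ yields
\[
\HF_{I^{(m)}}(mt)=\HF_{\gin(I^{(m)})}(mt)=\#\bigl(\ZZ^n\cap(m\Sigma_t\setminus S_m)\bigr),
\]
where $S_m=\{\alpha\in\RR^n_{\ge0}:x^\alpha\in J_m\}+\RR^n_{\ge0}$ is the staircase region of $J_m$. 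Since $\aHF_I(t)=\lim_m m^{-n}\HF_{I^{(m)}}(mt)$ exists by Theorem~\ref{ahf is a limit}, it is enough to bound this lattice count, for large $m$, both below and above by $m^n\vol(\Gamma(I)\cap\Sigma_t)+o(m^n)$.

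For the lower bound I would replace the non-convex staircase $S_m$ by its convex hull, which is $P(\gin(I^{(m)}))$; as $\tfrac1m P(\gin(I^{(m)}))\subseteq\Delta(I)$ by definition of the limiting shape, this gives $S_m\subseteq m\,\Delta(I)$ and hence $\HF_{I^{(m)}}(mt)\ge\#\bigl(\ZZ^n\cap m(\Sigma_t\setminus\Delta(I))\bigr)$. The set $\Sigma_t\setminus\Delta(I)$ does not depend on $m$, is bounded, and is Jordan measurable because $\Delta(I)$ is convex, hence has Lebesgue-null boundary; convexity of $\Delta(I)$ in turn follows from $(I^{(k)})^a\subseteq I^{(ka)}$, $\gin(JK)\supseteq\gin(J)\gin(K)$ and $P(J^a)=a\,P(J)$, which together make the family $\{\tfrac1m P(\gin(I^{(m)}))\}_m$ directed under inclusion. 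Dividing by $m^n$ and letting $m\to\infty$ then gives $\aHF_I(t)\ge\vol(\Sigma_t\setminus\Delta(I))=\vol(\Gamma(I)\cap\Sigma_t)$.

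For the upper bound, the idea is to fix $k\ge1$, put $q=\lceil m/k\rceil$, and use the inclusions $(I^{(k)})^q\subseteq I^{(kq)}\subseteq I^{(m)}$ together with $\gin\bigl((I^{(k)})^q\bigr)\supseteq\gin(I^{(k)})^q$ to obtain $S_m\supseteq S\bigl(\gin(I^{(k)})^q\bigr)$, a staircase region whose Newton polytope is $q\,P(\gin(I^{(k)}))$. A Carath\'eodory-rounding estimate — write a point of $q\,P(\gin(I^{(k)}))$ as $q$ times a convex combination of at most $n+1$ exponent vectors of generators of $\gin(I^{(k)})$, plus a nonnegative vector; round the coefficients to nonnegative integers of sum $q$; and absorb the bounded rounding discrepancy into the nonnegative vector — shows that $S\bigl(\gin(I^{(k)})^q\bigr)$ contains the translate $q\,P(\gin(I^{(k)}))+(n+1)M_k$, where $M_k$ is the coordinatewise maximum of the generator exponents of $\gin(I^{(k)})$. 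Hence
\[
\HF_{I^{(m)}}(mt)\le\#\Bigl(\ZZ^n\cap m\bigl(\Sigma_t\setminus(\tfrac qm P(\gin(I^{(k)}))+\tfrac{n+1}{m}M_k)\bigr)\Bigr).
\]
Since this region is obtained from the fixed simplex $\Sigma_t$ by deleting a convex set, its boundary has $(n-1)$-measure bounded uniformly in $m$ and $k$, so the lattice count equals $m^n$ times the volume up to a uniform $O(m^{n-1})$; letting $m\to\infty$, so that $q/m\to1/k$ and $\tfrac{n+1}{m}M_k\to0$, gives $\aHF_I(t)\le\vol\bigl(\Sigma_t\setminus\tfrac1k P(\gin(I^{(k)}))\bigr)$, and letting $k$ run over a divisibility-cofinal sequence, so that $\tfrac1k P(\gin(I^{(k)}))\uparrow\Delta(I)$, yields $\aHF_I(t)\le\vol(\Sigma_t\setminus\Delta(I))=\vol(\Gamma(I)\cap\Sigma_t)$. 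This last step — fitting the Carath\'eodory estimate together with the two successive limits, and keeping every lattice-point error uniform — is the part I expect to be the real obstacle; the lower bound and the point count below are comparatively routine.

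Finally, let $I$ be the ideal of $r$ distinct points. Then $V(I^{(m)})$ is a zero-dimensional scheme of length $r\binom{m+n-1}{n}$, so $\HP_{I^{(m)}}$ is the constant polynomial $r\binom{m+n-1}{n}$ and $\aHP_I(t)=\lim_m m^{-n}r\binom{m+n-1}{n}=r/n!$ for every $t$. Since the symbolic regularity of $I$ is linearly bounded (Definition~\ref{LBSR}) — say $\reg(I^{(m)})\le cm$ — for every $t\ge c$ we have $mt\ge\reg(I^{(m)})$, hence $\HF_{I^{(m)}}(mt)=\HP_{I^{(m)}}(mt)=r\binom{m+n-1}{n}$ for all $m$, and therefore $\aHF_I(t)=r/n!$ as well. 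Combined with the first part and the boundedness of $\Gamma(I)$ for points (so that $\Gamma(I)\subseteq\Sigma_t$ for $t\gg0$), this also recovers Musta\c t\u a's identity $\vol(\Gamma(I))=r/n!$.
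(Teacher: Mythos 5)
Your argument is correct in outline and proves the right statement, but it takes a genuinely different route from the paper for the key identity $\aHF_I(t)=\vol(\Gamma(I)\cap\TT_t)$. Both proofs start from the same observation that $\HF_{I^{(m)}}(mt)$ counts lattice points in the complement of the staircase $S_m$ inside the simplex, and both must deal with the same difficulty: the staircase $S_m$ can a priori be strictly smaller than its convex hull $P(\gin(I^{(m)}))$. You resolve this by sandwiching, with the lower containment supplied by a Carath\'eodory--rounding lemma showing that $\gin(I^{(k)})^q$ contains every monomial in $q\,P(\gin(I^{(k)}))+(n+1)M_k$, followed by a double limit in $m$ and then $k$; this is essentially Musta\c t\u a's original mechanism and the one familiar from the Okounkov-body literature, and it does work (the rounding estimate you sketch is sound: taking $c_i=\lceil q\lambda_i\rceil$ gives $\sum c_i\geq q$ while overshooting by at most $(n+1)M_k$ coordinatewise). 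The paper instead proves existence of $\lim_m\vol(L_{m,t}/m)$ by a Fekete-type superadditivity argument along factorials (Theorem \ref{limitexists}), and then disposes of the convex hulls by the much softer Lemma \ref{convexhull}: a rational convex combination $\frac{k}{\ell}a+\frac{\ell-k}{\ell}b$ of points of $L_m/m$ already lies in $L_{m\ell}/(m\ell)$ by the semigroup property alone, so $\overline{\bigcup L_m/m}=\Delta(I)$ with no quantitative rounding, at the cost of a separate Lipschitz-graph argument (Lemma \ref{boundary}) to see that the boundary of the union is null, where you invoke convexity of $\Delta(I)$ instead. Your route is heavier in its upper bound (the step you yourself flag) but makes the comparison with $\Delta(I)$ one-sided and transparent; the paper's route is more elementary but hides the convex-hull issue in Lemma \ref{convexhull}. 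Your treatment of the second assertion (constant Hilbert polynomial $r\binom{m+n-1}{n}$ for fat points, plus LBSR to identify $\aHF$ with $\aHP$ for $t\gg0$) coincides with the paper's corollary.
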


In the last section we present examples and briefly discuss properties of the new asymptotic
invariants.

\section{Limit of a sequence of initial ideals}
 Let $I$ be a homogeneous radical ideal in $\field[\PP^n]$, let
$\gin(I^{(m)})$ be the generic initial ideal of $I^{(m)}$, where $m$ is a nonnegative integer.
Observe that any monomial element of a monomial ideal in $\field[\PP^n]$
can be regarded as a point in $\RR^{n+1}$ via the identification $x^{\alpha} \mapsto \alpha$.
Recall also that, by Green \cite[Theorem 2.21]{Gre98} and Nagata-Zariski theorem, the following
holds.
\begin{lemma}\label{nolast}
The generators of $\gin(I^{(m)})$ for a radical homogeneous ideal $I \subset \field[\PP^n] = \field[x_1,\dots,x_{n+1}]$
does not contain the last variable $x_{n+1}$.
\end{lemma}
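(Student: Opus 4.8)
The plan is to reduce the statement to Green's Theorem 2.21, whose content is exactly that, with respect to the degree reverse lexicographic order, the generic initial ideal of a \emph{saturated} homogeneous ideal has no minimal generator divisible by the last variable $x_{n+1}$. So the whole task becomes verifying that each symbolic power $I^{(m)}$ is saturated with respect to the irrelevant ideal $M=(x_1,\dots,x_{n+1})$, i.e. that $I^{(m)}:M=I^{(m)}$ (this is the saturatedness already flagged in the introduction). Granting that, the conclusion is immediate: since the characteristic is zero, Galligo's theorem \cite{Gal74} guarantees that $\gin(I^{(m)})$ is Borel-fixed (strongly stable), and for such ideals in the degrevlex order ``saturated'' coincides with ``no minimal generator involves $x_{n+1}$''; Green's result then applies verbatim, and the ideals may be regarded in $\field[x_1,\dots,x_n]$.

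To establish saturatedness I would start from the Zariski-Nagata description recalled in the introduction,
$$I^{(m)} = \bigcap_{p \in V(I)} \mathfrak{m}_p^m,$$
and argue that each factor is already $M$-saturated. Indeed, $\mathfrak{m}_p$ is the height-$n$ homogeneous prime of a point $p\in\PP^n$, the power $\mathfrak{m}_p^m$ is $\mathfrak{m}_p$-primary (its radical is the prime $\mathfrak{m}_p$, generated up to coordinates by a regular sequence of variables), so its only associated prime is $\mathfrak{m}_p\neq M$. Since $\mathfrak{m}_p\subsetneq M$, the ideal $M$ is not contained in this associated prime and hence contains a nonzerodivisor modulo $\mathfrak{m}_p^m$, giving $\mathfrak{m}_p^m:M=\mathfrak{m}_p^m$. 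As colon by the fixed ideal $M$ commutes with intersections,
$$I^{(m)}:M = \Big(\bigcap_{p} \mathfrak{m}_p^m\Big):M = \bigcap_{p}\big(\mathfrak{m}_p^m:M\big) = \bigcap_{p}\mathfrak{m}_p^m = I^{(m)},$$
which is precisely saturatedness. Equivalently one reads off $\Ass(I^{(m)})\subseteq\{\mathfrak{m}_p : p\in V(I)\}$, a set not containing $M$, so $M$ is not an associated prime.

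Two points deserve care rather than being waved through. First, when $\dim V(I)>0$ the intersection is infinite, so I would make explicit that saturation commutes with arbitrary (not merely finite) intersections; this is formal, since $(-:M)$ is a left-exact operation and the equality above holds index-wise. Second, and this is the one genuine input, the argument is special to the degree reverse lexicographic order: only for this order is saturatedness detected so cleanly by the non-appearance of $x_{n+1}$ among the minimal generators of the gin, and this is exactly the combinatorial reason Green's Theorem 2.21 can be applied. The radicality of $I$ enters only to license the clean Zariski-Nagata formula for $I^{(m)}$, and the characteristic-zero hypothesis enters only through Galligo's theorem to force $\gin(I^{(m)})$ to be strongly stable. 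The main obstacle is thus not technical depth but bookkeeping: pinning down that ``saturated'' translates, for strongly stable ideals in degrevlex, into the desired statement about minimal generators.
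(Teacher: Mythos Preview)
Your proposal is correct and follows exactly the route the paper indicates: the paper states the lemma with the justification ``by Green \cite[Theorem 2.21]{Gre98} and Nagata--Zariski theorem'' and, in the introduction, spells out the same two-step argument (Zariski--Nagata $\Rightarrow$ $I^{(m)}$ is saturated, then Green's theorem in degrevlex $\Rightarrow$ no generator involves $x_{n+1}$). You have simply supplied the details the paper leaves implicit, in particular the verification that each $\mathfrak{m}_p^m$ is $M$-saturated and that the colon $(-:M)$ commutes with the (possibly infinite) intersection.
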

Thus naturally $\gin(I^{(m)})$ can be treated as a subset of $\RR^n$.

Let $L_m$ be the subsets of $\RR^n$ defined as $\{\alpha+\RR^n_{\geq 0} : x^{\alpha} \in \gin(I^{(m)})\}$.
Since $I^{(p)} \cdot I^{(q)} \subset I^{(p+q)}$, the sets $L_m$ satisfy
$$L_p+L_q \subset L_{p+q},$$
where $+$ denotes the algebraic sum of sets.
This imply, in particular, that
\begin{equation}\label{zawieranie}
kL_p\subset L_{kp}.
\end{equation}

Let $t$ be a given positive integer (unless stated otherwise, $t$ will always be an integer). In what follows we will consider the sets $L_m$
intersected with the set $\TT_{mt}:=\{(x_1,\dots,x_n) \in \RR^n_{\geq 0} : x_1+\ldots+x_n \leq mt
\}$, denoting by $L_{m,t} = L_m \cap \TT_{mt}$. Observe that if $L_{m,t}\subset \TT_m$ then
$\frac{L_{m,t}}{m}\subset \TT_t$ and

\begin{equation}\label{t-zawieranie}
kL_{p,t}\subset L_{kp,t}
\end{equation}
holds.

We want to prove the following, slightly more general theorem.
\begin{theorem}\label{limitexists}
Take a sequence $\{L_m\}_{m}$ of subsets of $\RR_{\geq 0}$ satisfying
\begin{enumerate}[i)]
\item if $\alpha \in L_m$ then $\alpha + \RR^n_{\geq 0} \subset L_m$, and
\item $L_p+L_q \subset L_{p+q}$.
\end{enumerate}
Then there exists the limit
$$\lim_{m\to \infty} \vol\left( \frac{L_{m,t}}{m}\right)=\sup_m \vol \left(\frac{L_{m,t}}{m}\right)=\vol \left( \bigcup_{m} \frac{L_{m,t}}{m} \right).$$
\end{theorem}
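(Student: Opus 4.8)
The plan is to reduce this to a standard Fekete-type subadditivity argument, adapted to sets rather than numbers. First I would define $a_m := \vol(L_{m,t}) = \vol(L_m \cap \TT_{mt})$ and check that the sequence $\{a_m\}$ is \emph{superadditive} up to the scaling we care about: from hypothesis (ii) and the observation leading to \eqref{t-zawieranie} one gets $kL_{p,t} \subset L_{kp,t}$, so $\vol(L_{kp,t}) \geq \vol(kL_{p,t}) = k^n \vol(L_{p,t})$. More generally, $L_{p,t} + L_{q,t} \subset L_{p+q,t}$ needs a short check: if $\alpha \in L_{p,t}$ and $\beta \in L_{q,t}$ then $\alpha+\beta \in L_p + L_q \subset L_{p+q}$ by (ii), and $\alpha+\beta \in \TT_{pt}+\TT_{qt} = \TT_{(p+q)t}$, so indeed $\alpha+\beta \in L_{(p+q),t}$. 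This does not immediately give $a_{p+q} \geq a_p + a_q$ because the Minkowski sum of two sets has volume at least the \emph{larger} of the two (when both contain a translate of $\RR^n_{\geq 0}$-cones they are ``upward closed'', and one can show $\vol((A+B)\cap\TT_{(p+q)t}) \geq \vol(A\cap \TT_{pt}) + \vol(B\cap\TT_{qt})$ using that $A$ and $B$ are complements of bounded-below regions), but in any case the cleanest route is via the scaling inclusion \eqref{t-zawieranie} alone.

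Next I would work with the rescaled sets $M_m := \frac{1}{m}L_{m,t} \subset \TT_t$. Inclusion \eqref{zawieranie}, i.e. $kL_{p}\subset L_{kp}$, rescales to $M_p \subset M_{kp}$ for all $k \geq 1$; more generally I claim $M_p \subset M_q$ whenever $p \mid q$, and with a bit more work (using $L_p + L_{q-p} \subset L_q$ together with upward-closedness) that $\frac1p L_{p,t}$ are ``asymptotically nested''. The key structural point is that all $M_m$ live inside the fixed bounded region $\TT_t$, so their volumes are uniformly bounded by $\vol(\TT_t) = t^n/n!$; hence $s := \sup_m \vol(M_m)$ is finite. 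Then I would show $\lim_m \vol(M_m) = s$: given $\varepsilon > 0$, pick $p$ with $\vol(M_p) > s - \varepsilon$; for any large $m$ write $m = kp + r$ with $0 \leq r < p$, and use $kL_{p,t} \subset L_{kp,t} \subset L_{m,t}$ (the last inclusion because $L_{kp} \subset L_{kp+r}$ by upward-closedness combined with adding elements of $L_r$, or more simply $L_{kp}+L_r \subset L_m$ and $L_r \ni$ points near the origin's cone — here one uses that $L_r$ is nonempty and upward-closed so $L_{kp}+L_r \supseteq L_{kp}$ translated, hence $L_{kp} \subset$ a translate inside $L_m$). Thus $\vol(L_{m,t}) \geq \vol(kL_{p,t}) = k^n\vol(L_{p,t})$, giving $\vol(M_m) \geq (k/m)^n \vol(M_p) \to \vol(M_p) > s - \varepsilon$ as $m \to \infty$ since $k/m \to 1/p \cdot \ldots$ — wait, I should be careful: $k/m \to 1/p$ is wrong; rather $kp/m \to 1$, so $(kp/m)^n \vol(M_p) = \vol(M_p)\cdot(kp/m)^n$, and since $M_p = \frac1p L_{p,t}$ we have $\vol(kL_{p,t})/m^n = (kp/m)^n \vol(M_p)$. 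Hence $\liminf_m \vol(M_m) \geq \vol(M_p) > s - \varepsilon$, and combined with $\vol(M_m) \leq s$ for all $m$, the limit exists and equals $s$.

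Finally, for the equality with $\vol(\bigcup_m M_m)$: since the $M_m$ are not literally nested I would pass to the cofinal subsequence $M_{p!}$ (or $M_{m!}$), which \emph{is} increasing by \eqref{zawieranie} since $p! \mid (p+1)!$, so $\bigcup_m M_m = \bigcup_p M_{p!}$ and by continuity of volume from below (monotone convergence for the indicator functions) $\vol(\bigcup_m M_m) = \lim_p \vol(M_{p!}) = s$. One has to note $\vol(\bigcup_m M_m) \geq \sup_m \vol(M_m) = s$ trivially, and $\leq s$ because each $M_{p!} \subset \TT_t$ and the union is still inside $\TT_t$ with volume realized as the increasing limit. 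The main obstacle I anticipate is handling the set-valued superadditivity carefully — specifically justifying the inclusions $L_{kp,t} \subset L_{m,t}$ for $m$ not a multiple of $p$, which requires exploiting upward-closedness (hypothesis (i)) together with nonemptiness of the $L_r$, rather than the naive Fekete argument that works for numbers; and making sure the truncation by $\TT_{mt}$ is compatible with all these inclusions, which it is precisely because $\TT_{a} + \TT_b = \TT_{a+b}$ and $k\,\TT_a = \TT_{ka}$.
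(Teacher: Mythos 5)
Your proposal is correct and follows essentially the same route as the paper: the key step in both is the Fekete-type squeeze using the scaled inclusion $kL_{p,t}\subset L_{kp,t}$ together with a translate trick to absorb the remainder $r$ (you translate by an element of $L_r$; the paper equivalently uses $dL_{1,t}+\frac{p-d}{m!}L_{m!,t}\subset L_{p,t}$), yielding a lower bound with ratio $(kp/m)^n\to 1$, and both pass to the nested factorial subsequence $M_{p!}$ to identify the limit with the volume of the union. Your self-correction of the naive inclusion $L_{kp,t}\subset L_{m,t}$ to ``a translate of $L_{kp,t}$ lies in $L_{m,t}$'' is exactly the point that makes the argument work, and matches the paper's treatment.
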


\begin{proof}
From \eqref{zawieranie} we have the following
sequence:
$$
L_{1,t}\subset\frac{L_{2,t}}{2}\subset\frac{L_{2\cdot 3,t}}{2\cdot
3}\subset \ldots \subset \frac{L_{m!,t}}{m!}\subset \ldots \subset
\TT_t,
$$
so the subsequence $\vol \left(\frac{L_{m!,t}}{m!}\right)$ as
monotonous and bounded, has a limit, say $g$.

We shall prove that for any nonnegative integer $m$ there exists $P$, $P>m!$, such that for
any $p\geq P$ the following holds
\begin{equation}\label{eq-trzyciagi}
\left(\frac{p-m!}{p}\right)^n\vol \left(\frac{L_{m!,t}}{m!}\right)\leq
\vol\left(\frac{L_{p,t}}{p}\right) \leq \vol\left( \frac{L_{p!,t}}{p!}\right).
\end{equation}
This claim will end the proof.

Indeed, taking \eqref{eq-trzyciagi} granted, pick any $\varepsilon >0$. Then there exists an $m$ such that
$$\vol\left(\frac{L_{p!,t}}{p!}\right)\in (g-\varepsilon, g+\varepsilon)$$
for any $p \geq m$, in particular
$$\vol \left(\frac{L_{m!,t}}{m!}\right)\in (g-\varepsilon, g+\varepsilon).$$
For such an $m$ and $p$ big enough it holds
$$\left(\frac{p-m!}{p}\right)^n\in (1-\varepsilon, 1+\varepsilon).$$
This, together with \eqref{eq-trzyciagi}, implies that for each $\varepsilon>0$,
for $p$ big enough (depending on $\varepsilon$)
$$(1-\varepsilon)(g-\varepsilon)\leq \vol \left(\frac{L_{p,t}}{p}\right)\leq  g+\varepsilon.$$

Now, to prove \eqref{eq-trzyciagi} let us write $p=r \cdot m!+d$, $0\leq d<m!$, $r,d \in \mathbb{Z}_{\geq 0}$. Observe that
$$d L_{1,t}+ \frac{p-d}{m!}L_{m!,t}\subset L_{p,t}.$$
Thus
$$\frac{d}{p}L_{1,t}+ \frac{p-d}{p}\frac{L_{m!},t}{m!}\subset \frac{L_{p,t}}{p},$$
so
$$\vol \left(\frac{p-d}{p}\frac{L_{m!,t}}{m!}\right)\leq \vol\left(\frac{d}{p}L_{1,t}+
  \frac{p-d}{p}\frac{L_{m!,t}}{m!}\right)\leq \vol \left(\frac{L_{p,t}}{p}\right).$$
Thus
\begin{gather*}
\left(\frac{p-m!}{p}\right)^n\vol \left(\frac{L_{m!,t}}{m!}\right)\leq
  \left(\frac{p-d}{p}\right)^n \vol \left(\frac{L_{m!,t}}{m!}\right) =\\
= \vol
  \left(\frac{p-d}{p}\frac{L_{m!,t}}{m!}\right)\leq \vol \left(\frac{L_{p,t}}{p}\right).
\end{gather*}
It is obvious  that $\vol \left(\frac{L_{p,t}}{p}\right)\leq \vol\left(\frac{L_{p!,t}}{p!}\right)$.
\end{proof}

\section{Volume and number of points}

Keeping the notation  $L_m=\{\alpha+\mathbb{R}^n_{\geq 0} : \alpha \in \gin(I^{(m)})\}$
define
$$\Gamma_{m} := (\RR^n_{\geq 0}\setminus L_m)$$
and
$$\Gamma_{m,t} := \Gamma_{m}\cap \mathbb{T}_{mt}.$$

Our next aim is the following lemma.
\begin{lemma}\label{volumepoints}
Denote by $\#\Gamma_{m,t}$ the number of integer-coefficient points in $\Gamma_{m,t}$. Then the following
limits exist and are equal.
$$\lim_{m\to\infty}\frac{\#\Gamma_{m,t}}{m^n}=\lim_{m\to\infty}\vol \left(\frac{\Gamma_{m,t}}{m}\right).$$
\end{lemma}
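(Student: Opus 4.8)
The plan is to compare the lattice point count of the region $\Gamma_{m,t}$ with its volume by a standard scaling argument, exploiting the fact that $\Gamma_{m,t}$ is a bounded region in $\RR^n_{\geq 0}$ (it sits inside $\TT_{mt}$, a simplex of side $mt$) and that its complement $L_{m,t}$ is an up-set. The key observation is that $\#\Gamma_{m,t}$ counts unit cubes: to each integer point $\alpha \in \Gamma_{m,t}$ associate the half-open cube $\alpha + [0,1)^n$. Because $L_m$ is closed under adding $\RR^n_{\geq 0}$ (assumption (i) in Theorem~\ref{limitexists}), if $\alpha \in \Gamma_m$ then the whole cube $\alpha+[0,1)^n$ is ``mostly'' in $\Gamma_m$, or more precisely the cubes attached to points of $\Gamma_m$ cover $\Gamma_m$ up to a bounded-codimension boundary layer. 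First I would make this precise: $\bigcup_{\alpha \in \Gamma_m \cap \ZZ^n}(\alpha+[0,1)^n) \supseteq \Gamma_m$, while on the other side each such cube meets $\Gamma_m$ (since its corner $\alpha$ does), so $\#(\Gamma_m\cap\ZZ^n)$ equals the number of unit cubes in the standard grid that intersect $\Gamma_m$. Intersecting with $\TT_{mt}$ introduces only an error concentrated near the hyperplane $x_1+\cdots+x_n = mt$.

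Next I would set up the sandwich. Writing $\Gamma_m^- \subseteq \Gamma_m \subseteq \Gamma_m^+$, where $\Gamma_m^+$ is the union of all closed unit cubes meeting $\Gamma_m$ and $\Gamma_m^-$ is the union of all closed unit cubes entirely contained in $\Gamma_m$, one gets $\vol(\Gamma_{m,t}^-) \leq \#\Gamma_{m,t} \leq \vol(\Gamma_{m,t}^+)$ after matching the $\TT_{mt}$-truncation (up to controlling the cubes straddling the cutting simplex, whose number is $O(m^{n-1})$ by a crude surface estimate, since those cubes lie within bounded distance of an $(n-1)$-dimensional simplex of diameter $O(m)$). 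The difference $\vol(\Gamma_{m,t}^+) - \vol(\Gamma_{m,t}^-)$ is the volume of the cubes straddling $\partial\Gamma_m$ inside $\TT_{mt}$. Here I would use that $\Gamma_m$ is a staircase region whose ``upper boundary'' is a graph-like surface: the number of straddling cubes is again $O(m^{n-1})$. After dividing by $m^n$ and rescaling by $\frac1m$ (so $\Gamma_{m,t}/m$ and the cube-unions $\Gamma_{m,t}^\pm/m$ all live in the fixed simplex $\TT_t$), both the truncation error and the boundary-layer error are $O(1/m) \to 0$, giving
$$\left|\frac{\#\Gamma_{m,t}}{m^n} - \vol\left(\frac{\Gamma_{m,t}}{m}\right)\right| \to 0.$$
By Theorem~\ref{limitexists} applied to the $L_m$ (hence $\vol(L_{m,t}/m)$ converges), $\vol(\Gamma_{m,t}/m) = \vol(\TT_t) - \vol(L_{m,t}/m)$ converges; therefore $\#\Gamma_{m,t}/m^n$ converges to the same limit, which is what the lemma asserts.

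The main obstacle is the boundary estimate: controlling that the number of unit cubes straddling the upper boundary staircase of $\Gamma_m$ is $O(m^{n-1})$ rather than something larger. One cannot appeal to smoothness, but one can use monotonicity of the staircase: project onto the first $n-1$ coordinates; over each integer column $(\alpha_1,\dots,\alpha_{n-1})$ the fiber of $\Gamma_m$ in the last coordinate is an interval $[0, h(\alpha))$ (or empty), because $L_m$ is an up-set in all coordinates. The straddling cubes over a fixed column number at most a constant (those near height $h(\alpha)$ and those near the cut $x_1+\cdots+x_n=mt$), and there are $O((mt)^{n-1})$ relevant columns inside the truncated region, giving the claimed bound. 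A mild additional point is that $h$ may be $+\infty$ on part of $\RR^{n-1}_{\geq 0}$ (where $\Gamma_m$ is unbounded in the last direction), but the truncation by $\TT_{mt}$ caps every fiber at length $\leq mt$, so all counts stay finite and the estimates go through uniformly.
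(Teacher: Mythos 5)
Your proposal is correct and follows essentially the same route as the paper: identify $\Gamma_{m,t}$ with a union of unit grid cubes attached to its integer points (using that $L_m$ is an up-set generated by lattice points), observe that the only discrepancy between $\#\Gamma_{m,t}$ and $\vol(\Gamma_{m,t})$ comes from cubes near the cutting hyperplane $x_1+\cdots+x_n=mt$, bound that error by $O(m^{n-1})$, divide by $m^n$, and invoke Theorem~\ref{limitexists} for convergence of the volumes. The paper streamlines the cube bookkeeping by noting that $\Gamma_m$ is \emph{exactly} the disjoint union of half-open unit cubes over its integer points (so no interior ``straddling'' layer needs to be estimated, only the truncation error, counted via lattice points on the hyperplanes $x_1+\cdots+x_n=\lfloor mt\rfloor-n,\dots,\lfloor mt\rfloor+1$), but this is a simplification of the same argument rather than a different one.
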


\begin{proof}
To begin with, observe that from Theorem \ref{limitexists}, the right-hand side limit exists.
It is enough to show that $\vol (\Gamma_{m,t})=\#\Gamma_{m,t}+o(m)$,
for some function $o(m)$ satysfying
$$\lim_{m\to\infty} \frac{o(m)}{m^n} = 0.$$
To show this
equality observe that a point $(k_1,\dots,k_n)\in \Gamma_{m,t}$ adds to the volume of $\Gamma_{m}$
the (unit) volume of the "cube" $[k_1,k_1+1]\times \dots \times[k_n, k_{n}+1]$. Cutting $\Gamma_m$
along $x_1+\dots+x_n=mt$ may subtract something from the volume of these ``cubes'' for which
$k_1+\dots+k_n+n>mt$. Thus, we have to estimate the number of integer-coefficient points in
$\Gamma_{m,t}$ with coordinates satisfying $k_1+\dots+k_n\geq mt-n$. This number equals at most the
number of integer-coefficient points on the following hyperplanes in $\Gamma_{m,t}$:
$$x_1+\ldots+x_n=\lfloor mt \rfloor - n, \, x_1+\ldots+x_n=\lfloor mt \rfloor -n+1, \dots, x_1+\ldots+x_n=\lfloor mt \rfloor +1.$$
The number of integer tuples $(x_1,\dots,x_n)$ satisfying $x_1+\ldots+x_n=d$ equals
$\binom{d+n-1}{n-1}$, hence $o(m)$ is bounded by
$$(n+2)\binom{\lfloor mt \rfloor +n}{n-1},$$
which completes the proof.
\end{proof}

\section{Asymptotic Hilbert function exists}

\begin{theorem}\label{ahf is a limit}
For a homogeneous radical ideal $I$ in $\field[\PP^n]$ there exists the limit
$$\lim_{m\to \infty}\frac{\HF_{I^{(m)}}(mt)}{m^n}.$$
\end{theorem}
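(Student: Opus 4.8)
The plan is to reduce the statement about Hilbert functions to the purely combinatorial statement proved in Theorem~\ref{limitexists} together with the volume-counting comparison of Lemma~\ref{volumepoints}. The key observation is that the Hilbert function of a saturated monomial ideal is computed by counting lattice points in the complement of its Newton region, and that taking generic initial ideals does not change the Hilbert function: $\HF_{I^{(m)}}(mt) = \HF_{\gin(I^{(m)})}(mt)$, since a generic change of coordinates is a linear isomorphism and passing to the initial ideal preserves Hilbert functions. Hence the first step is to rewrite
$$\HF_{I^{(m)}}(mt) = \dim_{\field}\bigl(\field[\PP^n]_{mt}/\gin(I^{(m)})_{mt}\bigr).$$

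Next I would use Lemma~\ref{nolast}: since no minimal generator of $\gin(I^{(m)})$ involves $x_{n+1}$, a monomial $x^{\beta}$ of degree $mt$ in $n+1$ variables lies outside $\gin(I^{(m)})$ exactly when its projection to the first $n$ exponents lies in $\Gamma_m = \RR^n_{\geq 0}\setminus L_m$, and then the remaining exponent of $x_{n+1}$ is determined by the constraint that the total degree is $mt$, which is possible precisely when $\alpha_1 + \dots + \alpha_n \le mt$, i.e. when the point lies in $\Gamma_{m,t}$. Thus $\HF_{I^{(m)}}(mt) = \#\Gamma_{m,t}$, the number of integer points in $\Gamma_{m,t}$. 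Combining this with Lemma~\ref{volumepoints}, the sequence $\HF_{I^{(m)}}(mt)/m^n$ has the same limiting behaviour as $\vol(\Gamma_{m,t}/m)$, and the latter limit exists by Lemma~\ref{volumepoints} (whose existence claim ultimately rests on Theorem~\ref{limitexists} applied to the sequence $\{L_m\}$, which satisfies hypotheses i) and ii) because $I^{(p)}\cdot I^{(q)}\subset I^{(p+q)}$ and ideals are closed under multiplication by monomials).

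So the argument is essentially a chain of identifications: $\HF_{I^{(m)}}(mt)$ equals the count of degree-$mt$ standard monomials, which (using that the last variable is free) equals $\#\Gamma_{m,t}$, which by Lemma~\ref{volumepoints} is asymptotically $m^n \vol(\Gamma_{m,t}/m) + o(m^n)$, and finally Theorem~\ref{limitexists} guarantees $\vol(\Gamma_{m,t}/m)$ converges. The place where care is genuinely needed is the bookkeeping in the first reduction: one must be careful about the role of the ambient variable $x_{n+1}$ and check that the ``standard monomials'' of $\gin(I^{(m)})$ in degree $mt$ are exactly the lattice points of $\Gamma_{m,t}$ — in particular that the freedom in the $x_{n+1}$-exponent produces exactly one standard monomial per lattice point of $\Gamma_m$ with coordinate sum $\le mt$, and none when the sum exceeds $mt$. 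Once that identification is clean, everything else is citing Lemma~\ref{volumepoints} and Theorem~\ref{limitexists}; I expect the main obstacle to be purely expository rather than mathematical, namely phrasing the lattice-point correspondence precisely enough that the boundary cases (coordinate sum exactly $mt$, and the $o(m^n)$ slack coming from the hyperplane slices) are visibly harmless.
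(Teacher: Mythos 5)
Your proposal is correct and follows essentially the same route as the paper: reduce to $\HF_{\gin(I^{(m)})}(mt)$, use Lemma~\ref{nolast} to identify the standard monomials of degree $mt$ with the lattice points of $\Gamma_{m,t}$ via dehomogenization, and then invoke Lemma~\ref{volumepoints} (and hence Theorem~\ref{limitexists}) to pass to the limit. The extra care you flag about the $x_{n+1}$-exponent bookkeeping is exactly the content of the paper's one-line ``by dehomogenization'' step, so there is nothing missing.
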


\begin{proof}
We begin with observation (coming e.g. from Gr\"obner Bases Theory) that
$$\HF_{I^{(m)}}(mt) = \HF_{\gin(I^{(m)})}(mt).$$
Monomials of degree $mt$ which are \underline{not} in $\gin(I^{(m)})$
form a basis for $(\field[\PP^n]/\gin(I^{(m)}))_{mt}$, thus their number is
equal to $\HF_{\gin(I^{(m)})}(mt)$. By dehomogenization and Lemma \ref{nolast}
each such a monomial corresponds to exactly one monomial in $\Gamma_{m,t}$.
Hence
$$\HF_{I^{(m)}}(mt) = \#\Gamma_{m,t},$$
and dividing both sides by $m^n$ allows to use Lemma \ref{volumepoints} and pass to the limit.
\end{proof}

\begin{corollary}\label{aHFvol}
Combining the above theorem with Theorem \ref{limitexists} we obtain
$$\aHF_I(t) = \lim_{m\to\infty} \vol \left(\frac{\Gamma_{m,t}}{m}\right) = \frac{t^n}{n!} - \vol \left( \bigcup_{m} \frac{L_{m,t}}{m} \right).$$
\end{corollary}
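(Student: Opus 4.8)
\smallskip
\noindent\textbf{Proof strategy.} The proof will be a short chain of equalities assembling the three preceding results, with no ingredient beyond continuity of Lebesgue measure. To get the first equality I would start from Theorem~\ref{ahf is a limit}, using the identity $\HF_{I^{(m)}}(mt)=\#\Gamma_{m,t}$ obtained in its proof, so that
$$\aHF_I(t)=\lim_{m\to\infty}\frac{\HF_{I^{(m)}}(mt)}{m^n}=\lim_{m\to\infty}\frac{\#\Gamma_{m,t}}{m^n},$$
and then apply Lemma~\ref{volumepoints} to rewrite the last limit as $\lim_{m\to\infty}\vol(\Gamma_{m,t}/m)$; this in particular records that the latter limit exists.

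For the second equality I would unwind the definitions of the relevant regions. Since $L_{m,t}=L_m\cap\TT_{mt}$ and $\TT_{mt}\subset\RR^n_{\geq 0}$, we have $\Gamma_{m,t}=\Gamma_m\cap\TT_{mt}=\TT_{mt}\setminus L_{m,t}$, and dividing by $m$ (and using $\TT_{mt}/m=\TT_t$) gives
$$\frac{\Gamma_{m,t}}{m}=\TT_t\setminus\frac{L_{m,t}}{m},\qquad \frac{L_{m,t}}{m}\subset\TT_t.$$
As $\gin(I^{(m)})$ has only finitely many minimal generators, $L_m$ is a finite union of shifted orthants, hence $L_{m,t}/m$ is a finite union of polytopes inside the bounded simplex $\TT_t$; it is therefore Lebesgue measurable of finite volume, and so
$$\vol\!\left(\frac{\Gamma_{m,t}}{m}\right)=\vol(\TT_t)-\vol\!\left(\frac{L_{m,t}}{m}\right)=\frac{t^n}{n!}-\vol\!\left(\frac{L_{m,t}}{m}\right),$$
using that the standard simplex $\TT_t$ has volume $t^n/n!$.

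Finally I would let $m\to\infty$ and invoke Theorem~\ref{limitexists}, whose hypotheses i) and ii) are exactly the properties of $L_m$ recorded above and whose conclusion identifies $\lim_{m\to\infty}\vol(L_{m,t}/m)$ with $\vol\big(\bigcup_m L_{m,t}/m\big)$; note that $\bigcup_m L_{m,t}/m$ is a countable union of measurable sets inside $\TT_t$, hence measurable with finite volume. Passing to the limit in the last display yields
$$\lim_{m\to\infty}\vol\!\left(\frac{\Gamma_{m,t}}{m}\right)=\frac{t^n}{n!}-\lim_{m\to\infty}\vol\!\left(\frac{L_{m,t}}{m}\right)=\frac{t^n}{n!}-\vol\!\left(\bigcup_m\frac{L_{m,t}}{m}\right),$$
which is the asserted identity. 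The one place that needs care is interchanging the limit with the set-theoretic complement: this is legitimate precisely because Theorem~\ref{limitexists} tells us the limiting volume of $L_{m,t}/m$ equals the \emph{full} volume of the union, not merely a lower bound for it --- equivalently, one argues along the cofinal nested subsequence $\{L_{m!,t}/m!\}$ and uses continuity of measure from below, exactly as in the proof of Theorem~\ref{limitexists}. Beyond this bookkeeping I expect no real obstacle.
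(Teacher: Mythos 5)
Your proposal is correct and follows exactly the route the paper intends: the corollary is stated as an immediate combination of Theorem~\ref{ahf is a limit} (via Lemma~\ref{volumepoints}) with Theorem~\ref{limitexists}, together with the elementary identity $\vol(\Gamma_{m,t}/m)=t^n/n!-\vol(L_{m,t}/m)$, and your write-up supplies precisely these steps with the measurability bookkeeping made explicit. No discrepancy with the paper's argument.
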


\section{Convexity and volume}

For a set $A \subset \RR^n$, by $\conv(A)$ we denote the convex hull of $A$.
For an ideal $I$ in $\field[\PP^n]$  define
$$\Delta(I):=\overline{\bigcup  \frac{\conv(L_m)}{m}}$$
and
$$\Gamma(I):=\overline{\bigcap \left( \RR^n_{\geq 0}\setminus \frac{\conv(L_m)}{m} \right) }.$$
The convex set $\Delta(I)$ is called the \emph{limiting shape} of $I$ (compare \cite{Maypoints,DSST}).
Observe that $\Gamma(I)$ is the closure of the complement of $\Delta(I)$. Our aim is to compare asymptotic Hilbert function $\aHF_{I}$ with the volume
of restricted $\Gamma(I)$. To do this, we begin with comparing $\Delta(I)$ with the sum of the sets $L_m/m$.
In other words, we will show that taking convex hulls, as in the original definition of $\Delta(I)$,
is superfluous when passing to the limit.

\begin{lemma}\label{convexhull}
For $I$ and $L_m$ as above
$$\overline{\bigcup\frac{L_{m}}{m}}=\Delta(I).$$
\end{lemma}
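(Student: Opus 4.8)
The plan is to prove the two inclusions $\overline{\bigcup L_m/m} \subseteq \Delta(I)$ and $\Delta(I) \subseteq \overline{\bigcup L_m/m}$ separately. The first is immediate: since $L_m \subseteq \conv(L_m)$ for every $m$, we have $\bigcup L_m/m \subseteq \bigcup \conv(L_m)/m$, and taking closures gives $\overline{\bigcup L_m/m} \subseteq \overline{\bigcup \conv(L_m)/m} = \Delta(I)$. The content of the lemma is therefore the reverse inclusion, namely that every point of $\conv(L_m)/m$ lies in $\overline{\bigcup_k L_k/k}$.

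The key observation is that the sets $\bigcup_k L_k/k$ (and their closure) are themselves convex, so it suffices to show that $L_m/m \subseteq \overline{\bigcup_k L_k/k}$ for each fixed $m$ — convexity then upgrades this automatically to $\conv(L_m)/m \subseteq \overline{\bigcup_k L_k/k}$, and taking the union over $m$ and closing finishes the argument. To see that $\bigcup_k L_k/k$ is convex, it is enough (by a standard argument for an increasing-in-the-limit family) to check that it is closed under midpoints up to points of arbitrarily nearby scale: given $\alpha/p \in L_p/p$ and $\beta/q \in L_q/q$ and a rational weight $\lambda = a/N$, use superadditivity $L_p + L_q \subseteq L_{p+q}$ and its iterate \eqref{zawieranie} $kL_p \subseteq L_{kp}$ to land the combination $\lambda(\alpha/p) + (1-\lambda)(\beta/q)$ inside $L_{Npq}/(Npq)$; arbitrary real weights then follow by passing to the closure. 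This is exactly the same mechanism already exploited in the proof of Theorem \ref{limitexists}, so it is really a reformulation of facts in hand rather than new work.

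With convexity of $\overline{\bigcup_k L_k/k}$ established, the remaining point is purely set-theoretic: $\conv(L_m) = \conv(L_m/m)\cdot m$ as sets in $\RR^n$, so $\conv(L_m)/m = \conv(L_m/m) \subseteq \conv\big(\overline{\bigcup_k L_k/k}\big) = \overline{\bigcup_k L_k/k}$, where the last equality uses that the closure of a convex set is convex and already contains all its convex combinations. Unioning over $m$ and taking the closure yields $\Delta(I) \subseteq \overline{\bigcup_k L_k/k}$, which combined with the trivial inclusion gives equality.

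The main obstacle is the convexity claim for $\bigcup_k L_k/k$: one has to be careful that the superadditivity hypothesis only gives containments between specific integer multiples, so forming an exact convex combination with a prescribed weight requires clearing denominators into a common index $Npq$ (or $N!$, mirroring the $m!$ trick in Theorem \ref{limitexists}) and invoking property (i) — that each $L_k$ is upward closed under $\RR^n_{\geq 0}$ — to absorb the rounding error $dL_{1}$ exactly as in \eqref{eq-trzyciagi}. Once the bookkeeping with these indices is arranged, every other step is formal.
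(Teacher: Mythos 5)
Your proof is correct and rests on the same mechanism as the paper's: clear the denominator of the rational weight and use superadditivity $kL_p\subset L_{kp}$, $L_p+L_q\subset L_{p+q}$ to land the combination $\frac{aq\alpha+(N-a)p\beta}{Npq}$ inside a single $L_{Npq}/(Npq)$, then pass to the closure for real weights; packaging this as convexity of $\overline{\bigcup_k L_k/k}$ is a mild strengthening (it handles points from different $L_p$, $L_q$, and cleanly disposes of convex combinations of more than two points), not a genuinely different route. One small remark: since $aqp+(N-a)pq=Npq$ exactly, no rounding error arises in this computation, so the appeal to property (i) and to the $dL_1$ absorption trick from \eqref{eq-trzyciagi} in your last paragraph is unnecessary.
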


\begin{proof}
It is enough to show that an element from $\conv(L_m/m)$ belongs to the left hand side of the equality.
Each such an element can be approached by a sequence of elements
of the form $\lambda a + (1-\lambda) b$, for a \underline{rational} $\lambda$ ($0\leq \lambda \leq 1$) and $a,b\in \frac{L_{m}}{m}$.
To complete the proof, write $\lambda = \frac{k}{\ell}$ where $k, \ell$ are integers. Then we may write
$$\frac{ka+(\ell-k)b}{\ell}= \frac{kma+(\ell-k)mb}{m\ell}.$$
As $ma, mb\in L_m$ and $kma\in L_{km}, (\ell-k)mb\in L_{(\ell-k)m}$, we
have
$$\frac{kma+(\ell-k)mb}{m\ell}\in \frac{L_{km}+L_{(\ell-k)m}}{m\ell}\subset \frac{L_{km+(\ell-k)m}}{m\ell}.$$
Thus $\lambda a + (1-\lambda) b \in \frac{L_{m\ell}}{m\ell}$, which completes the proof.
\end{proof}

\begin{lemma}\label{boundary}
With the notation as in Lemma \ref{convexhull} we have
$$\vol \left(\bigcup\frac{L_{m,t}}{m}\right) = \vol \left(\overline{\bigcup\frac{L_{m,t}}{m}}\right).$$
\end{lemma}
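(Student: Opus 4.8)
The plan is to show that adding the topological boundary of the set $\bigcup_m \frac{L_{m,t}}{m}$ contributes nothing to the volume, i.e.\ that this boundary is a Lebesgue-null set. Since the set $U := \bigcup_m \frac{L_{m,t}}{m}$ is an increasing union (by \eqref{t-zawieranie}) of the subsets $\frac{L_{m,t}}{m}$, and each $\frac{L_m}{m}$ is by property (i) an up-set (a union of translated orthants $\alpha + \RR^n_{\geq 0}$), the set $U$ is itself ``almost'' an up-set inside $\TT_t$; more precisely, if $\alpha \in U$ and $\beta \geq \alpha$ coordinatewise with $\beta \in \TT_t$, then $\beta \in U$. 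The key geometric consequence is that the boundary $\partial U$ (relative to $\TT_t$, away from the hyperplane $x_1+\cdots+x_n = t$ and the coordinate hyperplanes) is the graph of a monotone-type staircase function, hence has measure zero.

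Concretely, first I would fix the ambient region $\TT_t$ and decompose $\partial U$ into the part lying on $\partial \TT_t$ (the coordinate hyperplanes and the simplex face $x_1 + \cdots + x_n = t$), which trivially has $n$-dimensional measure zero, and the part lying in the interior of $\TT_t$. For the interior part, I would exploit the up-set property: for each fixed $(x_1,\dots,x_{n-1})$, the slice $\{x_n : (x_1,\dots,x_n) \in U\}$ is an up-ray $(c(x_1,\dots,x_{n-1}), \infty)$ or $[c,\infty)$ intersected with the allowed range, so that $\partial U$ in the interior is contained in the graph $\{x_n = c(x_1,\dots,x_{n-1})\}$ of a single function $c$ of $n-1$ variables. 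A graph of any function $\RR^{n-1} \to \RR$ is a Lebesgue-null subset of $\RR^n$ (it meets each vertical line in one point, so by Fubini its measure is zero). Therefore $\vol(\partial U) = 0$, and since $\overline{U} = U \cup \partial U$, we get $\vol(\overline U) = \vol(U)$, which is exactly the claimed equality once we note $\overline{\bigcup \frac{L_{m,t}}{m}}$ differs from $\overline{U}$ only in how the closure interacts with the truncation $\TT_{mt}$ versus $\TT_t$ — but these agree after dividing by $m$ and taking the union, by the remark following \eqref{t-zawieranie}.

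An alternative, perhaps cleaner, route avoids the function-graph argument: since $\vol(\frac{L_{m,t}}{m}) \to \vol(U)$ by Theorem~\ref{limitexists}, and $U \subseteq \overline{U} \subseteq \TT_t$ with $\vol(\TT_t) = t^n/n!$ finite, it suffices to produce, for each $\varepsilon > 0$, an open set containing $\partial U$ of volume less than $\varepsilon$; one can take a thin neighbourhood of the staircase boundary of a single $\frac{L_{m,t}}{m}$ that already approximates $U$ well in volume, and control its thickness. Either way the monotonicity/up-set structure is what makes the boundary thin.

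The main obstacle I anticipate is purely a matter of careful bookkeeping rather than deep content: one must be sure that the relevant ``boundary'' is taken relative to the correct ambient set, and that the portions of $\partial U$ sitting on the truncating simplex $\TT_t$ and on the coordinate hyperplanes are handled separately (they are lower-dimensional and hence null, but they are genuinely part of the topological boundary and must be acknowledged). A secondary subtlety is that $U$ need not itself be convex — only its closure is, by Lemma~\ref{convexhull} — so one cannot simply invoke ``the boundary of a convex body has measure zero''; this is precisely why the up-set/monotone-slice argument is the right tool. Once the slicing picture is in place, Fubini finishes the argument with no real computation.
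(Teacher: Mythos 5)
There is a genuine gap in the central step. You reduce the lemma to showing that the boundary of the up-set $U$ is Lebesgue-null, which is the right reduction, but your mechanism --- slicing along the last coordinate and asserting that the interior part of $\partial U$ is contained in the graph $\{x_n=c(x_1,\dots,x_{n-1})\}$ --- is false. The slice $U_{x'}=\{x_n:(x',x_n)\in U\}$ being an up-ray does \emph{not} imply that $\partial U\cap(\{x'\}\times\RR)$ is the single point $c(x')$: the boundary of an up-set contains ``vertical walls'' parallel to the $x_n$-axis. Already for $U=\bigl((1,0)+\RR^2_{\geq 0}\bigr)\cup\bigl((0,1)+\RR^2_{\geq 0}\bigr)$ the segment $\{1\}\times[0,1]$ lies in $\partial U$ but not in the graph of $c$. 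For a single $L_{m,t}/m$ these walls are finitely many facets of orthants and are harmless, but $U$ is an infinite increasing union, and the boundary of an infinite union is not controlled by the union of the boundaries; your slicing picture provides no uniform bound that survives this limit. Your ``alternative route'' does not close the gap either: bounding $\vol(\overline{U})-\vol(L_{m,t}/m)$ by a thin neighbourhood of the staircase of a single $L_{m,t}/m$ presupposes that $\overline{U}\setminus U$ is small, which is exactly what is to be proved.

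The conclusion you want is nevertheless true, and the paper's proof shows why a change of viewpoint is needed: rotate coordinates so that the diagonal $x_1=\dots=x_n$ becomes the new vertical axis. The up-set property gives a whole translated orthant $\alpha+\RR^n_{\geq 0}$ at every point of $U$, and this orthant contains an open cone around the diagonal direction; consequently no two points of $\partial U$ can differ by a vector in that open double cone, so in the rotated coordinates $\partial U$ (and each $\partial(L_m/m)$) is the graph of a Lipschitz function with a Lipschitz constant independent of $m$. This uniform Lipschitz control is what passes to the limit and forces $\vol(\partial U)=0$; Fubini then finishes as you intended. In short: you used only the $x_n$-ray part of the orthant condition, which is too weak; the full orthant, seen along the diagonal, is what kills the walls.
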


\begin{proof}
It is enough to
show that the volume of the boundary of the set
$A=\bigcup\frac{L_m}{m}$ is zero. Change (rotate) the coordinates
in $\mathbb{R}^n$ in such a way that the  line $x_1=x_2=\ldots=x_n$
becomes the new $x_n$ axis. Then the boundary of $A$ is the limit
of the graphs of Lipschitz functions (all with the Lipschitz constant greater or equal to 1)
describing the boundary of
$A_m$. Thus, the boundary of $A$ is a graph of a function, and
hence has volume zero, which proves the lemma.
\end{proof}

The following theorem establishes connection between $\aHF$ and $\Gamma$. Recall that
$\TT_t = \{ (x_1,\dots,x_n) : x_1 + \ldots + x_n \leq t \}$.
\begin{theorem}\label{ahfvol}
For a homogeneous radical ideal $I \subset \field[\PP^n]$ we have
$$\aHF_{I}(t) = \vol (\Gamma(I) \cap \TT_t).$$
\end{theorem}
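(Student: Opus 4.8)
The plan is to read off the statement from Corollary~\ref{aHFvol}, which already expresses $\aHF_I(t)$ as $\frac{t^n}{n!}-\vol\bigl(\bigcup_m \frac{L_{m,t}}{m}\bigr)$; all that remains is to recognise $\frac{t^n}{n!}$ as $\vol(\TT_t)$ (the volume of the standard simplex) and the subtracted term as $\vol(\Delta(I)\cap\TT_t)$, and then to pass from $\TT_t\setminus\Delta(I)$ to $\Gamma(I)\cap\TT_t$ by discarding a null set. No new idea beyond this assembly is needed; the work is the bookkeeping of measure-zero discrepancies.

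First I would rewrite the subtracted term. Since $L_{m,t}=L_m\cap\TT_{mt}$ we have $\frac{L_{m,t}}{m}=\frac{L_m}{m}\cap\TT_t$, hence $\bigcup_m\frac{L_{m,t}}{m}=A\cap\TT_t$ with $A:=\bigcup_m\frac{L_m}{m}$. By Lemma~\ref{convexhull}, $\overline{A}=\Delta(I)$; and the proof of Lemma~\ref{boundary} in fact shows that the boundary of the \emph{unrestricted} set $A$ is the graph of a Lipschitz function, so $\vol(\partial A)=0$. Therefore $\overline{A}\setminus A\subseteq\partial A$ is Lebesgue-null, intersecting with the closed set $\TT_t$ preserves this, and
$$\vol\left(\bigcup_m\frac{L_{m,t}}{m}\right)=\vol(A\cap\TT_t)=\vol(\overline{A}\cap\TT_t)=\vol(\Delta(I)\cap\TT_t).$$
Substituting into Corollary~\ref{aHFvol} and using $\frac{t^n}{n!}=\vol(\TT_t)$ gives $\aHF_I(t)=\vol(\TT_t)-\vol(\Delta(I)\cap\TT_t)=\vol(\TT_t\setminus\Delta(I))$.

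It remains to replace $\TT_t\setminus\Delta(I)$ by $\Gamma(I)\cap\TT_t$. Since $\Gamma(I)$ is the closure of the complement of $\Delta(I)$ in $\RR^n_{\geq 0}$ (the observation made right after its definition, via Lemma~\ref{convexhull}), the set $\Gamma(I)\setminus(\RR^n_{\geq 0}\setminus\Delta(I))$ is contained in $\partial\Delta(I)\cup\partial\RR^n_{\geq 0}$; here $\partial\Delta(I)=\partial\overline{A}\subseteq\partial A$ is null by the same remark on Lemma~\ref{boundary}, and $\partial\RR^n_{\geq 0}$ is null as a finite union of coordinate hyperplanes. Intersecting with $\TT_t$ yields $\vol(\Gamma(I)\cap\TT_t)=\vol((\RR^n_{\geq 0}\setminus\Delta(I))\cap\TT_t)=\vol(\TT_t\setminus\Delta(I))=\aHF_I(t)$. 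The one delicate point, which I would write out carefully, is precisely this boundary bookkeeping: one must check that every set-theoretic discrepancy invoked above — between $A$ and $\overline{A}=\Delta(I)$, between $\Gamma(I)$ and the complement of $\Delta(I)$ (and, if one unwinds the $\conv$ in the definition of $\Gamma(I)$, between $\bigcup_m\frac{\conv(L_m)}{m}$ and $A$, which are sandwiched $A\subseteq\bigcup_m\frac{\conv(L_m)}{m}\subseteq\overline{A}$), and along $\partial\TT_t$ — really sits inside a Lebesgue-null set, the crucial input being that Lemma~\ref{boundary} delivers nullity of $\partial A$ for the full set $A=\bigcup_m L_m/m$, not merely for its truncation.
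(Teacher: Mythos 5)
Your proposal is correct and follows essentially the same route as the paper: Corollary~\ref{aHFvol} to express $\aHF_I(t)$, Lemma~\ref{boundary} to pass to the closure, Lemma~\ref{convexhull} to identify that closure with $\Delta(I)$, and a final null-set argument to swap the complement of $\Delta(I)$ for its closure $\Gamma(I)$. The only cosmetic difference is in that last step, where the paper invokes convexity of $\Delta(I)\cap\TT_t$ while you reuse the Lipschitz-boundary observation; your explicit bookkeeping of the measure-zero discrepancies (including the sandwich handling of the $\conv$ in the definition of $\Delta(I)$) is a careful spelling-out of what the paper leaves implicit.
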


\begin{proof}
By Corollary \ref{aHFvol}
$$\aHF_{I}(t) = \frac{t^n}{n!} - \vol \left( \bigcup \frac{L_{m,t}}{m} \right).$$
By Lemma \ref{boundary} this is equal to the volume of closure of the sum of sets presented above,
which by Lemma \ref{convexhull} is equal to
$$\frac{t^n}{n!} - \vol (\Delta(I) \cap \TT_t).$$
The set $\Delta(I) \cap \TT_t$ is convex, so (compare proof of Lemma \ref{boundary}) the volume of its restricted complement
is equal to the volume of the closure of the restricted complement. Therefore
$$\vol(\Gamma(I) \cap \TT_t) = \frac{t^n}{n!} - \vol(\Delta(I) \cap \TT_t)$$
and the claim follows.
\end{proof}

\section{Asymptotic Hilbert polynomial}

To show that asymptotic Hilbert polynomial exists and has expected properties, we need a property
called \emph{linearly bounded symbolic regularity}, LBSR for short. By $\reg(I)$ we mean the Castelnuovo-Mumford regularity
of $I$.
\begin{definition}\label{LBSR}
Let $I$ be a homogeneous ideal in $\field[\PP^n]$. We say that $I$ satisfies linearly bounded symbolic regularity,
or \emph{is LBSR} for short, if there exists constants $a,b>0$, such that
$$\reg(I^{(m)}) \leq am+b \text{ for $m$ big enough.}$$
\end{definition}

So far, we do not know any example of a homogeneous ideal, which is \underline{not} LBSR. However,
a lot of effort has been given to prove that every ideal is LBSR, but with limited success.
We bring here some of results, listing the ideals, which are certainly LBSR (compare \cite{Herzog}):
\begin{itemize}
\item $\dim(R/I) \leq 2$ (which means $\dim(V(I))\leq 1$ as a set in $\PP^n$);
\item ideals with singular locus of dimension zero;
\item in particular: ideals defining disjoint linear subspaces and ideals defining lines intersecting in points, which we use in examples;
\item monomial ideals.
\end{itemize}
We recall also that ordinary powers $I^m$ are linearly bounded, as showed by Swanson \cite{Swan}.

\begin{theorem}\label{ahp istnieje}
If $I$ is a radical homogeneous ideal with linearly bounded symbolic regularity then $\aHP_{I}$ exists, and for $t$ big enough $\aHP_{I}(t)=\aHF_{I}(t)$.
\end{theorem}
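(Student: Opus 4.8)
The plan is to deduce the statement directly from Theorem \ref{ahf is a limit} together with the classical fact that the Hilbert function of a homogeneous ideal stabilises to its Hilbert polynomial once the degree exceeds the Castelnuovo--Mumford regularity: for any homogeneous $J \subset \field[\PP^n]$ one has $\HF_J(s) = \HP_J(s)$ for all $s \geq \reg(J)$ (this follows from $\reg(\field[\PP^n]/J) = \reg(J) - 1$ and the fact that $\HF_{\field[\PP^n]}$ is itself a polynomial). So the only real work is to check that, for $t$ large, the integer $mt$ eventually overtakes $\reg(I^{(m)})$.

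First I would fix the constants $a, b > 0$ and the threshold $m_0$ coming from the LBSR hypothesis, so that $\reg(I^{(m)}) \leq am + b$ for every $m \geq m_0$. Fix an integer $t$ with $t > a$. Then for every $m \geq \max\{m_0,\, b/(t-a)\}$ we have $mt \geq am + b \geq \reg(I^{(m)})$, and hence
$$\HF_{I^{(m)}}(mt) = \HP_{I^{(m)}}(mt).$$
Consequently the two sequences $\dfrac{\HF_{I^{(m)}}(mt)}{m^n}$ and $\dfrac{\HP_{I^{(m)}}(mt)}{m^n}$ agree for all sufficiently large $m$.

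It then remains only to pass to the limit. By Theorem \ref{ahf is a limit} the sequence $\dfrac{\HF_{I^{(m)}}(mt)}{m^n}$ converges, to $\aHF_I(t)$; since it eventually coincides term by term with $\dfrac{\HP_{I^{(m)}}(mt)}{m^n}$, the latter converges as well and to the same value. This shows at once that the limit defining $\aHP_I(t)$ exists and that $\aHP_I(t) = \aHF_I(t)$ for every integer $t > a$, i.e. for $t$ big enough.

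I expect no substantial obstacle here: the genuine content --- existence of the asymptotic Hilbert function --- is already contained in Theorem \ref{ahf is a limit}, and the LBSR hypothesis serves precisely to transport that conclusion from $\HF$ to $\HP$. The only mildly delicate point is the interplay between $m$ and $t$: because $mt$ must dominate the linear bound $am + b$ uniformly in $m$, one is forced to require $t > a$, which is exactly why the identity $\aHP_I = \aHF_I$ holds only asymptotically in $t$ and not for every $t \geq 0$.
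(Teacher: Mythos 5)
Your argument is correct and follows essentially the same route as the paper: use the LBSR bound to force $mt \geq \reg(I^{(m)})$ for large $m$, so that $\HF_{I^{(m)}}(mt) = \HP_{I^{(m)}}(mt)$, and then invoke Theorem \ref{ahf is a limit} to pass to the limit. The only (immaterial) difference is that you obtain the identity for all $t > a$ by letting $m$ absorb the constant $b$, whereas the paper simply takes $t > a+b$ so that the inequality holds for all $m \geq 1$.
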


\begin{proof}
Recall that for $t > \reg(I)$ we have $\HF_{I}(t) = \HP_{I}(t)$. For $t > a+b$ (where $a$, $b$ as
in Definition \ref{LBSR}) we have $mt > am+b$ and
$\HP_{I^{(m)}}(mt) = \HF_{I^{(m)}}(mt)$. We pass to the limit using Theorem \ref{ahf is a limit}.
\end{proof}

\begin{corollary}\label{ahpvol}
Observe that, by Theorem \ref{ahfvol}, for an LBSR ideal $I$ we have
$$\aHP_{I}(t) = \vol (\Gamma \cap \TT_t), \quad t \gg 0.$$
\end{corollary}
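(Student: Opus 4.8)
The plan is to read this statement off from the two theorems that precede it, with essentially no new work. First I would invoke Theorem \ref{ahp istnieje}: since $I$ is a radical homogeneous ideal that is LBSR, the limit defining $\aHP_I(t)$ exists, and moreover there is a threshold $t_0$ --- concretely one may take $t_0 = a+b$ with $a,b$ the constants of Definition \ref{LBSR} --- such that $\aHP_I(t) = \aHF_I(t)$ for every integer $t \geq t_0$. Thus for $t \gg 0$ the symbol $\aHP_I(t)$ is legitimate and coincides with the asymptotic Hilbert function.

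Second, I would apply Theorem \ref{ahfvol}, which holds for any homogeneous radical ideal and any $t \geq 0$ and asserts $\aHF_I(t) = \vol(\Gamma(I) \cap \TT_t)$. Composing the two identities gives
$$\aHP_I(t) = \aHF_I(t) = \vol(\Gamma(I) \cap \TT_t)\qquad\text{for }t\gg 0,$$
which is exactly the statement of the corollary. The only bookkeeping is to confirm that the ``$t$ big enough'' used here is the one already fixed in the proof of Theorem \ref{ahp istnieje} via $t > a+b$; no further estimate is needed.

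I do not anticipate any real obstacle: all of the substance has already been spent in Theorem \ref{ahf is a limit} (existence of $\aHF_I$), Theorem \ref{ahfvol} (its identification with $\vol(\Gamma(I)\cap\TT_t)$), and Theorem \ref{ahp istnieje} (its coincidence with $\aHP_I$ for large $t$), so I would keep the written proof to the two-line citation above. If one later wished to extend the identity from integer $t \gg 0$ to all real $t \gg 0$, one could additionally observe that both sides are eventually polynomial in $t$, so that agreement on an infinite set of integers propagates to all $t$; but for the corollary as phrased --- with $t$ an integer by the standing convention --- this refinement is unnecessary.
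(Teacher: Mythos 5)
Your proposal is correct and matches the paper's own reasoning: the corollary is obtained exactly by combining Theorem \ref{ahp istnieje} (giving $\aHP_I(t)=\aHF_I(t)$ for $t\gg 0$ under the LBSR hypothesis) with Theorem \ref{ahfvol} (giving $\aHF_I(t)=\vol(\Gamma(I)\cap\TT_t)$). No further argument is needed, and your remark about the threshold $t>a+b$ is consistent with the proof of Theorem \ref{ahp istnieje}.
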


Now we will show that $\aHP_I$ indeed can be called a polynomial.
\begin{theorem}\label{ahppoly}
For an ideal $I$ as above, the asymptotic Hilbert polynomial $\aHP_I$ is a polynomial.
\end{theorem}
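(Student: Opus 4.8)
The plan is to extract the asymptotic Hilbert polynomial from the leading behaviour of the coefficients of the ordinary Hilbert polynomials $\HP_{I^{(m)}}$. The key inputs are a uniform bound on the degrees of these polynomials and the fact --- established in Theorem~\ref{ahp istnieje} and Corollary~\ref{ahpvol} --- that the limit defining $\aHP_I(t)$ genuinely exists for all large integers $t$, with value $\aHF_I(t)=\vol(\Gamma(I)\cap\TT_t)$.

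First I would bound the degrees uniformly. Since $I$ is radical, $I^m\subseteq I^{(m)}\subseteq I$ for all $m\ge 1$, hence $\sqrt{I^{(m)}}=\sqrt I=I$ and therefore $\dim\field[\PP^n]/I^{(m)}=\dim\field[\PP^n]/I$ does not depend on $m$. Put $d:=\dim\field[\PP^n]/I-1=\dim V(I)$; then each $\HP_{I^{(m)}}$ is a one-variable polynomial of degree $d$, say $\HP_{I^{(m)}}(s)=\sum_{j=0}^{d}c_{j,m}s^{j}$. Substituting $s=mt$ and dividing by $m^{n}$,
$$P_m(t):=\frac{\HP_{I^{(m)}}(mt)}{m^{n}}=\sum_{j=0}^{d}a_{j,m}\,t^{j},\qquad a_{j,m}:=c_{j,m}\,m^{\,j-n},$$
so every $P_m$ is a polynomial in $t$ of degree at most $d$, the bound $d$ being independent of $m$.

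Next I would pin down the coefficients by interpolation. By Theorem~\ref{ahp istnieje} (equivalently Corollary~\ref{ahpvol}), for every integer $t$ larger than the constant $a+b$ of Definition~\ref{LBSR} the limit $\lim_{m\to\infty}P_m(t)$ exists and equals $\aHP_I(t)=\aHF_I(t)$, a finite number. Pick $d+1$ pairwise distinct integers $t_0<t_1<\dots<t_d$, all exceeding $a+b$. The Vandermonde matrix $V=(t_i^{\,j})_{0\le i,j\le d}$ is invertible, and the identities expressing the vector $(P_m(t_0),\dots,P_m(t_d))$ as $V$ applied to the vector $(a_{0,m},\dots,a_{d,m})$ give $(a_{0,m},\dots,a_{d,m})=V^{-1}(P_m(t_0),\dots,P_m(t_d))$. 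Since the right-hand side converges coordinatewise as $m\to\infty$ and $V^{-1}$ is a fixed matrix, each $a_{j,m}$ converges, say $a_{j,m}\to\bar a_j$. Hence for \emph{every} real $t$ we get $P_m(t)=\sum_{j=0}^{d}a_{j,m}t^{j}\to\sum_{j=0}^{d}\bar a_j t^{j}=:q(t)$, so the limit defining $\aHP_I$ exists for all $t$ and equals the polynomial $q$, of degree at most $d=\dim V(I)$. This proves that $\aHP_I$ is a polynomial, which for $t\gg 0$ coincides with $\vol(\Gamma(I)\cap\TT_t)$ by Corollary~\ref{ahpvol}.

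The only step that really needs an argument is the uniform degree bound. The crude bound $\deg\HP_{I^{(m)}}\le n$ already suffices (just take $n+1$ interpolation points) and costs nothing; to get the sharp statement that $\aHP_I$ has degree at most $\dim V(I)$ one uses, as above, that $\sqrt{I^{(m)}}=\sqrt I$, so that symbolic powers do not change the support. Everything else is the soft fact that a family of polynomials of bounded degree which converges pointwise at sufficiently many points converges coefficientwise, applied to the convergence of $P_m$ at the $t_i$ supplied by Theorem~\ref{ahp istnieje}; in particular no further information about the geometry of $\Gamma(I)$ is needed.
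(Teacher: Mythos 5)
Your proposal is correct and follows essentially the same route as the paper: a uniform degree bound on the rescaled Hilbert polynomials, followed by interpolation at $\deg+1$ sufficiently large integer points (where Theorem~\ref{ahp istnieje} guarantees pointwise convergence) to deduce coefficientwise convergence; your Vandermonde inversion is the paper's Lagrange interpolation in different clothing. The only addition is your sharper degree bound $\deg \aHP_I \le \dim V(I)$ via $\sqrt{I^{(m)}}=I$, which the paper also mentions parenthetically.
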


\begin{proof}
Observe that $\aHP_I$ is a pointwise limit of polynomials $f_m(t)$,
$$f_m(t) := \frac{\HP_{I^{(mt)}}(mt)}{m^n}$$
with common bound $\deg_t(f_m(t))\leq n$ for degree with respect to $t$ (this bound is given by the dimension of the ambient space;
in fact, we can bound this degree by a dimension of $V(I)$). Let $a_{m,j}$ be the coefficient
standing by $t^j$ in $f_m$. Choose $n+1$ distinct points $t_0,\dots,t_n$ big enough, as in the previous proof.
The coefficients $a_{m,j}$ depends polynomially (by Lagrange interpolation), hence continuously,
on values $f_m(t_0),\dots,f_m(t_n)$. Therefore each of the sequences $\{a_{m,0}\}_{m}$, $\{a_{m,1}\}_{m}, \dots, \{a_{m,n}\}_{m}$
is convergent, say to $a_0,\dots,a_n$, respectively. Then $f=a_0+a_1t+\ldots+a_nt^n$ is a pointwise limit of $\{f_m\}$, hence
equal to $\aHP_I$.
\end{proof}

\begin{remark}
From obvious reasons, we have defined $\aHF_I(t)$ for an integer $t$, thus the equality $\aHP_I(t)
= \vol(\Gamma(I) \cap \TT_t)$
 has been proved for integer values of $t$. One may ask if it also
holds for every positive $t$ (big enough), and the answer is positive. We do not present the full
proof here. It suffices to consider rational $t$, and for such a $t$ there exists a subsequence of
$\{\HF_{I^{(m)}}(mt)\}_m$ with integers values of $mt$, which gives the result.
\end{remark}

\section{Asymptotic Hilbert polynomial for subspaces}

Let $L$ be a sum of $s$ disjoint linear subspaces of dimension $r$ in $\mathbb{P}^n$ (called a \emph{flat};
by a \emph{fat flat} we denote such subspaces with multiplicities), as in \cite{DHST}. Let $I$ be the
ideal of $L$. Following \cite{DHST} define
$$P_{n,r,s,m}(t):=\binom{t+n}{n}-\HP_{I^{(m)}}(t).$$
Substitute $t$ by $mt$ into $P_{n,r,m,s}(t)$ and regard it as a polynomial in $m$ (this is indeed a polynomial, see \cite{DHST}).
The leading term of this polynomial is in \cite{DHST} denoted by $\Lambda_{n,r,s}(t).$

We have the following:

\begin{theorem}\label{fatflatsy}
For $I$ as above
$$\aHP_I(t)=\frac{t^n}{n!}-\Lambda(n,r,s)(t).$$
\end{theorem}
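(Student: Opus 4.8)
The plan is to read off both sides of the claimed identity from results already established in the excerpt and to match them term by term. First I would invoke Corollary \ref{ahpvol}, which says that for an LBSR ideal — and a flat $L$ has $\dim(V(I)) \leq n-1$ with singular locus empty, so it is certainly LBSR by the list following Definition \ref{LBSR} — we have $\aHP_I(t) = \vol(\Gamma(I) \cap \TT_t)$ for $t \gg 0$. By Theorem \ref{ahfvol} and Corollary \ref{aHFvol} this equals $\frac{t^n}{n!} - \vol(\Delta(I) \cap \TT_t)$. So the content of the theorem is exactly the assertion $\Lambda_{n,r,s}(t) = \vol(\Delta(I) \cap \TT_t)$ for $t$ large, or equivalently $\frac{t^n}{n!} - \Lambda_{n,r,s}(t) = \aHF_I(t)$ for large integer $t$.

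Next I would unwind the definition of $\Lambda_{n,r,s}(t)$ from \cite{DHST}. By construction $P_{n,r,s,m}(mt) = \binom{mt+n}{n} - \HP_{I^{(m)}}(mt)$, and this is a polynomial in $m$ whose leading coefficient (in $m$, of degree $n$) times $m^n$ is $\Lambda_{n,r,s}(t) \cdot m^n + (\text{lower order in } m)$. Dividing by $m^n$ and letting $m \to \infty$:
\begin{equation*}
\lim_{m\to\infty} \frac{\binom{mt+n}{n} - \HP_{I^{(m)}}(mt)}{m^n} = \Lambda_{n,r,s}(t).
\end{equation*}
Since $\binom{mt+n}{n} = \frac{(mt)^n}{n!} + O(m^{n-1})$, the first term contributes $\frac{t^n}{n!}$ in the limit, and the second term contributes $\lim_{m\to\infty} \HP_{I^{(m)}}(mt)/m^n = \aHP_I(t)$ by definition — this limit exists precisely because $I$ is LBSR (Theorem \ref{ahp istnieje}). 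Rearranging gives $\Lambda_{n,r,s}(t) = \frac{t^n}{n!} - \aHP_I(t)$, which is the claim.

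The only genuine check, and the step I would flag as the main obstacle, is reconciling the normalizations: the paper of \cite{DHST} defines $\Lambda_{n,r,s}(t)$ as ``the leading term'' of $m \mapsto P_{n,r,s,m}(mt)$, and I must be certain that this polynomial in $m$ genuinely has degree $n$ (not less), that its leading coefficient is the function of $t$ being called $\Lambda_{n,r,s}(t)$ rather than that coefficient times $m^n$, and that dividing by $m^n$ and taking the limit indeed isolates exactly this leading coefficient. This is a bookkeeping matter about matching conventions with the cited paper, together with the observation that $\binom{mt+n}{n}$ as a polynomial in $m$ has leading term $\frac{t^n}{n!}m^n$; both are routine once the conventions of \cite{DHST} are laid side by side. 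I would therefore organize the proof as: (i) note $I$ is LBSR so Corollary \ref{ahpvol} and Theorem \ref{ahp istnieje} apply; (ii) write $\aHP_I(t) = \lim_m \HP_{I^{(m)}}(mt)/m^n$; (iii) substitute into the definition of $\Lambda_{n,r,s}(t)$ via $P_{n,r,s,m}$, using $\binom{mt+n}{n}/m^n \to t^n/n!$; (iv) conclude.
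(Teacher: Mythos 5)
Your proposal is correct and follows essentially the same route as the paper: both arguments rest on the facts that $\HP_{I^{(m)}}(mt)$ is a polynomial in $m$ (via \cite[Lemma 2.1]{DHST}), that dividing by $m^n$ and letting $m\to\infty$ isolates the degree-$n$ coefficient, and that $\binom{mt+n}{n}/m^n\to t^n/n!$, so that the leading coefficient of $P_{n,r,s,m}(mt)$ is $t^n/n!-\aHP_I(t)$. The ``main obstacle'' you flag (that the relevant polynomial in $m$ has degree exactly $n$) is resolved in the paper by noting that $\aHP_I(t)$ exists (by Theorem \ref{ahp istnieje}, since disjoint linear subspaces are LBSR) and is non-zero; your opening detour through Corollary \ref{ahpvol} and the volume of $\Gamma(I)\cap\TT_t$ is harmless but not needed.
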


\begin{proof}
From \cite[Lemma 2.1]{DHST}
$$\HP_{I^{(m)}}(t) = \sum_{0\leq i<m} \binom{t-i+r}{r}\binom{i+n-r-1}{n-r-1},$$
hence $\HP_{I^{(m)}}(mt)$ is a polynomial also with respect to $m$.

Now observe that for any polynomial $f(m)=a_0+a_1m+\ldots+a_nm^n+\ldots+a_dm^d$,
$$\lim_{m \to \infty} \frac{f(m)}{m^n} = \begin{cases}
a_n, & d \leq n \\
\pm\infty, & d > n.
\end{cases}$$

Take $f(m) := \HP_{I^{(m)}}(mt)$, which is a polynomial in $m$. Since $\aHP_I(t)$ exists and is non-zero,
the degree of $f(m)$ is equal to $n$. Therefore its leading term can be obtained with limiting $f(m)/m^n$,
which completes the proof.
\end{proof}

\begin{corollary}
For an ideal $I$ of $r$ distinct points in $\PP^n$, we have
$$\aHP_I(t) = \frac{r}{n!} = \vol(\Gamma(I)).$$
Thus Theorem \ref{ahp istnieje} gives a generalization to
results obtained by Musta\c t\u a.
\end{corollary}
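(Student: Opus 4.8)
The plan is to derive the final corollary directly from Theorem~\ref{fatflatsy} by specializing to the case $r=0$, where a point is a zero-dimensional linear subspace. First I would observe that an ideal $I$ of $r$ distinct points in $\PP^n$ is exactly a flat with parameters $n$, $0$, $s=r$ (dimension-zero subspaces, $r$ of them), so Theorem~\ref{fatflatsy} applies and yields $\aHP_I(t) = \frac{t^n}{n!} - \Lambda(n,0,r)(t)$. The task then reduces to computing $\Lambda(n,0,r)(t)$.

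Next I would compute $\HP_{I^{(m)}}(t)$ using the formula from \cite[Lemma 2.1]{DHST} quoted in the proof of Theorem~\ref{fatflatsy}: with $r=0$ there, $\binom{t-i+r}{r} = \binom{t-i}{0} = 1$ and $\binom{i+n-r-1}{n-r-1} = \binom{i+n-1}{n-1}$, so $\HP_{I^{(m)}}(t) = \sum_{0 \le i < m} \binom{i+n-1}{n-1} = \binom{m+n-1}{n}$, which is independent of $t$ (as expected for points). Hence $P_{n,0,m,r}(t) = \binom{t+n}{n} - r\binom{m+n-1}{n}$ — wait, I must be careful: the $s$ disjoint points each contribute, so the correct expression is $\HP_{I^{(m)}}(t) = r\binom{m+n-1}{n}$ for the whole flat, giving $P_{n,0,r,m}(t) = \binom{t+n}{n} - r\binom{m+n-1}{n}$. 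Substituting $t \mapsto mt$ and viewing this as a polynomial in $m$, the term $\binom{mt+n}{n}$ has leading coefficient $\frac{t^n}{n!}m^n$ and the term $r\binom{m+n-1}{n}$ has leading coefficient $\frac{r}{n!}m^n$, so $\Lambda(n,0,r)(t) = \frac{t^n}{n!} - \frac{r}{n!}$ as a polynomial in $t$. Therefore $\aHP_I(t) = \frac{t^n}{n!} - \left(\frac{t^n}{n!} - \frac{r}{n!}\right) = \frac{r}{n!}$.

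Finally I would identify $\frac{r}{n!}$ with $\vol(\Gamma(I))$. Since $I$ is LBSR (it is zero-dimensional, so $\dim(R/I) = 1 \le 2$), Corollary~\ref{ahpvol} gives $\aHP_I(t) = \vol(\Gamma(I) \cap \TT_t)$ for $t \gg 0$. Because $\aHP_I(t) = \frac{r}{n!}$ is constant in $t$, the sets $\Gamma(I) \cap \TT_t$ have volume stabilizing at $\frac{r}{n!}$, which forces $\Gamma(I)$ itself to be bounded (up to measure zero) with $\vol(\Gamma(I)) = \frac{r}{n!}$; this recovers Musta\c t\u a's result. The only mild subtlety — and the step I would be most careful about — is bookkeeping the roles of $r$ versus $s$ in the notation of \cite{DHST} (here the dimension parameter is $0$ and the count of subspaces is what I have been calling $r$), and confirming that the constant-in-$t$ polynomial $\aHP_I$ genuinely pins down the total volume of the unbounded-looking $\Gamma(I)$ rather than just its truncations; this follows since $\Gamma(I) \subseteq \bigcup_t \TT_t = \RR^n_{\ge 0}$ and $\vol(\Gamma(I) \cap \TT_t) \nearrow \vol(\Gamma(I))$ by monotone convergence.
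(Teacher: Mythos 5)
Your proof is correct. The paper's own argument is more direct: it simply notes that $\HP_{I^{(m)}}(t) = r\binom{m+n-1}{n}$ is constant in $t$ (the number of conditions imposed by $r$ fat points on forms of large degree), divides by $m^n$, and passes to the limit to get $r/n!$, then cites Corollary~\ref{ahpvol} for the volume equality. You instead specialize Theorem~\ref{fatflatsy} to flats of dimension $0$, which forces you to compute $\Lambda(n,0,r)(t) = \frac{t^n}{n!}-\frac{r}{n!}$ and then watch the $\frac{t^n}{n!}$ terms cancel; the underlying computation ($\HP_{I^{(m)}}(mt)=r\binom{m+n-1}{n}$, whose $m^n$-coefficient is $r/n!$) is identical, so this is a repackaging rather than a genuinely different argument. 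You do add two useful points of care that the paper glosses over: you flag the notational clash between the dimension parameter $r$ of \cite{DHST} and the number of points $r$ here (and correctly restore the factor counting the $s$ disjoint subspaces, which the paper's displayed formula omits), and you justify the passage from $\vol(\Gamma(I)\cap\TT_t)$ for $t\gg 0$ to $\vol(\Gamma(I))$ by monotone convergence, a step the paper leaves implicit when it says the second equality ``follows from Corollary~\ref{ahpvol}.''
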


\begin{proof}
Since the number of conditions imposed by $r$ points in $\PP^n$ on forms of
sufficiently big degree $t$ is constant and equal $r\binom{m+n-1}{n} = \HP_{I^{(m)}}(t)$,
we pass to the limit to obtain $\aHP_I=r/n!$. The second equality follows from Corollary \ref{ahpvol}.
\end{proof}

\section{Examples}

\begin{example}
Let $I$ be the ideal of two generic lines in $\PP^3$. Then the limiting shape
$\Gamma(I) \subset \RR^3$ is a sum of a cylinder over a triangle with vertices
at $(0,0,0)$, $(1,0,0)$, $(0,2,0)$, and a pyramid over the triangle
$(1,0,0)$, $(2,0,0)$, $(0,2,0)$ with vertex at $(1,0,1)$. The asymptotic Hilbert polynomial
is equal $\aHP_{I}(t) = t - 2/3$.
\end{example}
\begin{figure}[H]
\begin{tikzpicture}[line cap=round,line join=round,>=triangle 45,x=1.0cm,y=1.0cm]
\clip(-2.8,-0.7) rectangle (7.0600000000000005,6.3);
\fill[color=zzttqq,fill=zzttqq,fill opacity=0.1] (1.64,0.84) -- (0.24,1.76) -- (3.88,2.16) -- cycle;
\fill[color=zzttqq,fill=zzttqq,fill opacity=0.1] (0.26,4.84) -- (3.9,5.12) -- (3.88,2.16) -- (0.24,1.76) -- cycle;
\fill[color=zzttqq,fill=zzttqq,fill opacity=0.1] (0.26,4.84) -- (1.66,3.6) -- (1.64,0.84) -- (0.24,1.76) -- cycle;
\fill[color=zzttqq,fill=zzttqq,fill opacity=0.1] (1.66,3.6) -- (3.9,5.12) -- (3.88,2.16) -- (1.64,0.84) -- cycle;
\fill[color=zzttqq,fill=zzttqq,fill opacity=0.1] (1.651739564190076,2.460059858230507) -- (1.64,0.84) -- (2.9474116305587232,-0.019156214367161084) -- cycle;
\fill[color=zzttqq,fill=zzttqq,fill opacity=0.1] (1.651739564190076,2.460059858230507) -- (3.88,2.16) -- (2.9474116305587232,-0.019156214367161084) -- cycle;
\fill[color=zzttqq,fill=zzttqq,fill opacity=0.1] (1.64,0.84) -- (2.9474116305587232,-0.019156214367161084) -- (3.88,2.16) -- cycle;
\draw [color=zzttqq] (1.64,0.84)-- (0.24,1.76);
\draw [color=zzttqq] (0.24,1.76)-- (3.88,2.16);
\draw [color=zzttqq] (3.88,2.16)-- (1.64,0.84);
\draw (1.64,0.84)-- (1.66,3.6);
\draw (0.24,1.76)-- (0.26,4.84);
\draw (3.88,2.16)-- (3.9,5.12);
\draw [color=zzttqq] (0.26,4.84)-- (3.9,5.12);
\draw [color=zzttqq] (3.9,5.12)-- (3.88,2.16);
\draw [color=zzttqq] (3.88,2.16)-- (0.24,1.76);
\draw [color=zzttqq] (0.24,1.76)-- (0.26,4.84);
\draw [color=zzttqq] (0.26,4.84)-- (1.66,3.6);
\draw [color=zzttqq] (1.66,3.6)-- (1.64,0.84);
\draw [color=zzttqq] (1.64,0.84)-- (0.24,1.76);
\draw [color=zzttqq] (0.24,1.76)-- (0.26,4.84);
\draw [color=zzttqq] (1.66,3.6)-- (3.9,5.12);
\draw [color=zzttqq] (3.9,5.12)-- (3.88,2.16);
\draw [color=zzttqq] (3.88,2.16)-- (1.64,0.84);
\draw [color=zzttqq] (1.64,0.84)-- (1.66,3.6);
\draw [color=zzttqq] (1.651739564190076,2.460059858230507)-- (1.64,0.84);
\draw [color=zzttqq] (1.64,0.84)-- (2.9474116305587232,-0.019156214367161084);
\draw [color=zzttqq] (2.9474116305587232,-0.019156214367161084)-- (1.651739564190076,2.460059858230507);
\draw [color=zzttqq] (1.651739564190076,2.460059858230507)-- (3.88,2.16);
\draw [color=zzttqq] (3.88,2.16)-- (2.9474116305587232,-0.019156214367161084);
\draw [color=zzttqq] (2.9474116305587232,-0.019156214367161084)-- (1.651739564190076,2.460059858230507);
\draw [color=zzttqq] (1.64,0.84)-- (2.9474116305587232,-0.019156214367161084);
\draw [color=zzttqq] (2.9474116305587232,-0.019156214367161084)-- (3.88,2.16);
\draw [color=zzttqq] (3.88,2.16)-- (1.64,0.84);
\draw [dash pattern=on 5pt off 5pt] (0.25999999999999995,4.839999999999999)-- (0.26,6.18);
\draw [dash pattern=on 5pt off 5pt] (3.9000000000000004,5.12)-- (3.88,6.24);
\draw [dash pattern=on 5pt off 5pt] (1.66,3.6000000000000005)-- (1.66,6.18);
\begin{scriptsize}
\draw [fill=black] (0.24,1.76) circle (1.5pt);
\draw [fill=black] (1.64,0.84) circle (1.5pt);
\draw [fill=black] (2.9474116305587232,-0.019156214367161084) circle (1.5pt);
\draw [fill=black] (3.88,2.16) circle (1.5pt);
\draw [fill=black] (1.651739564190076,2.460059858230507) circle (1.5pt);
\end{scriptsize}
\end{tikzpicture}
\caption{$\Gamma(I)$ for two general lines}\label{fig: Gamma}
\end{figure}
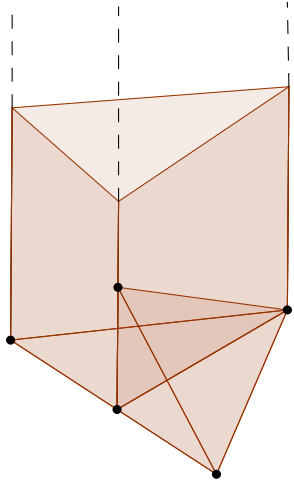

\begin{proof}
First, to compute the asymptotic Hilbert polynomial of $I$ it is enough to observe that
$V(I)$ is a fat flat, hence $\aHP_I$ can be computed using Theorem \ref{fatflatsy} and \cite{DHST}.
Observe also that, for generic lines, computing $\gin(I)$ equals, in fact, computing $\iin(I)$.
Next, we prove that
\begin{equation}
\label{gin2lines}
\iin(I) \supset (x_1x_3,x_2^2,x_1x_2,x_1^2).
\end{equation}
To do this observe that each of the monomials in \eqref{gin2lines}
is bigger (with respect to degrevlex order) than any of the following monomials:
\begin{equation}\label{smallermonomials}
x_2x_3, x_3^2, x_1x_4, x_2x_4, x_3x_4, x_4^2.
\end{equation}
Take one of the monomials from \eqref{gin2lines} and all monomials from \eqref{smallermonomials}
(there are seven of them in total, all of degree 2), and let $W$ be the vector space of polynomials
based on these monomials. Vanishing along a line imposes at most $3$ condition on forms of degree
$2$, so there must be an element $f \in W \cap I$. Assume that a coefficient standing by a monomial
from \eqref{gin2lines} is zero. Such an $f$, restricted to the hyperplane $H = \{x_4=0\}$, is
either zero, or of the form $g := ax_2x_3+bx_3^2$. The
first possibility means that at least one of lines lies in $H$, a contradiction with generality.
The curve $\{g=0\}$ on $H$ passes through two general points on $H$, the traces of general lines on
$H$. Observe that $g$ describes the sum of two lines --- one, given by the equation $x_3=0$ is fixed, hence cannot pass through a
general point, the second, given by $ax_2+bx_3=0$ belongs to the pencil of lines through $(1:0:0)$, thus cannot pass through two general points at the same time.

Having proved \eqref{gin2lines}, it follows that (by definition) $\Delta(I)$ is contained in the convex hull
of the set $\{(1,0,1), (0,2,0), (1,1,0), (2,0,0)\}$. The complement of this set is exactly the shape we claim to be
the $\Gamma(I)$, and we know that it is contained in $\Gamma(I)$. Computing the volume
of our shape restricted to $\TT_t$ for $t$ big enough, we obtain that it is equal to $\aHP_I(t)$, which
completes the proof.
\end{proof}

\begin{example}
We will show that asymptotic version of Hilbert polynomial is a subtle invariant, and cannot be driven
directly out of the Hilbert polynomial. To do this, consider two ideals. Let $I$ be the ideal
of two lines (general enough) intersecting at a general point, and an additional general point. Let $J$ be the ideal
of two general lines (both ideals are considered in $\field[\PP^3]$).
The number of conditions imposed by vanishing along a line on forms of degree $t$ is $t+1$, hence
$$\HP_J(t) = 2\HP_{\text{ideal of a line}}(t) = 2t+2.$$
In the case of intersecting lines, vanishing along the second needs one condition less (it is already imposed by the
first line). Therefore
$$\HP_I(t) = (t+1) + (t) + 1 = 2t+2,$$
the last one being the condition imposed by the additional point. Thus $\HP_I(t)=\HP_J(t)$.

Passing to $\aHP$, recall that $\aHP_J(t)=t-2/3$, but $\aHP_I(t)=t-5/6$. To prove this, we first
compute $\aHP$ for ideal of two intersecting lines, and then add $\aHP$ of a single point in $\PP^3$, which
is $1/6$. Computing $\aHP$ for an ideal $L$ of two intersecting lines is the most technical part here,
which we omit. The idea is to compute
$\HP_{L^{(m)}}(t)$ for each $m$ and $t$, that is to find, how many conditions are imposed by vanishing
along these lines on forms of degree $t$. To do this, we change the coordinate system such that lines
are described by $x_1=x_2=0$ and $x_1=x_3=0$, and compute monomials that ``survive'' vanishing.
The result yields
$$\HP_{L^{(m)}}(t) = (m^2+m)t - m^3+\frac{1}{2}m^2+\frac{3}{2}m.$$
Replacing $t$ by $mt$, dividing by $m^3$ and passing to the
limit with $m$ gives the result. It is worth to mention here, that $\Gamma(L)$ is just the cylinder over a
triangle with vertices $(0,0,0)$, $(1,0,0)$, $(0,2,0)$, and $\Gamma(I)$ is equal to $\Gamma(L)$ plus a
pyramid over the triangle $(1,0,0)$, $(3/2,0,0)$, $(0,2,0)$ with vertex at $(1,0,1)$.
\end{example}

\paragraph*{\emph{Acknowledgements.}}
   We would like to thank  E. Tutaj and T. Szemberg for helpful discussions.

\bigskip \small

\bigskip
   Marcin Dumnicki, Halszka Tutaj-Gasi\'nska,
   Jagiellonian University, Institute of Mathematics, {\L}ojasiewicza 6, PL-30-348 Krak\'ow, Poland

\nopagebreak
   \textit{E-mail address:} \texttt{Marcin.Dumnicki@im.uj.edu.pl}

   \textit{E-mail address:} \texttt{Halszka.Tutaj@im.uj.edu.pl}

\bigskip
   Justyna Szpond,
   Instytut Matematyki UP,
   Podchor\c a\.zych 2,
   PL-30-084 Krak\'ow, Poland

\nopagebreak
   \textit{E-mail address:} \texttt{szpond@up.krakow.pl}

\end{document}